\providecommand{\U}[1]{\protect\rule{.1in}{.1in}}
\providecommand{\U}[1]{\protect\rule{.1in}{.1in}}
\providecommand{\U}[1]{\protect\rule{.1in}{.1in}}
\newtheorem{theorem}{Theorem}[section]
\newtheorem{corollary}[theorem]{Corollary}
\newtheorem{lemma}[theorem]{Lemma}
\theoremstyle{definition}
\newtheorem{remark}[theorem]{Remark}
\begin{document}
\title{On the constants of the Bohnenblust-Hille and Hardy--Littlewood inequalities}
\author[G. Ara\'{u}jo]{Gustavo Ara\'{u}jo}
\address{Departamento de Matem\'{a}tica \\
Universidade Federal da Para\'{\i}ba \\
58.051-900 - Jo\~{a}o Pessoa, Brazil.}
\email{gdasaraujo@gmail.com}
\author[D. Pellegrino]{Daniel Pellegrino}
\address{Departamento de Matem\'{a}tica \\
Universidade Federal da Para\'{\i}ba \\
58.051-900 - Jo\~{a}o Pessoa, Brazil.}
\email{pellegrino@pq.cnpq.br and dmpellegrino@gmail.com}
\thanks{The authors are supported by CNPq Grant 313797/2013-7 - PVE - Linha 2}

\begin{abstract}
In this paper, among other results, we improve the best known estimates for
the constants of the generalized Bohnenblust-Hille inequality. These
enhancements are then used to improve the best known constants of the
Hardy--Littlewood inequality; this inequality asserts that for a positive integer $m\geq2$ with $2m\leq
p\leq\infty$ and $\mathbb{K}=\mathbb{R}$ or $\mathbb{C}$ there exists a
constant $C_{m,p}^{\mathbb{K}}\geq1$ such that, for all continuous $m$--linear
forms $T:\ell_{p}^{n}\times\cdots\times\ell_{p}^{n}\rightarrow\mathbb{K}$, and
all positive integers $n$,%
\[
\left(  \sum_{j_{1},...,j_{m}=1}^{n}\left\vert T(e_{j_{1}},...,e_{j_{m}%
})\right\vert ^{\frac{2mp}{mp+p-2m}}\right)  ^{\frac{mp+p-2m}{2mp}}\leq
C_{m,p}^{\mathbb{K}}\left\Vert T\right\Vert ,
\]
and the exponent $\frac{2mp}{mp+p-2m}$ is sharp. In particular, we show that
for $p > 2m^{3}-4m^{2}+2m$ the optimal constants satisfying the above
inequality are dominated by the best known estimates for the constants of the
$m$-linear Bohnenblust--Hille inequality. More precisely if $\gamma$ denotes
the Euler--Mascheroni constant, considering the case of complex scalars as an
illustration, we show that%
\[
C_{m,p}^{\mathbb{C}}\leq\prod\limits_{j=2}^{m}\Gamma\left(  2-\frac{1}%
{j}\right)  ^{\frac{j}{2-2j}}<m^{\frac{1-\gamma}{2}},
\]
which is somewhat surprising since this new formula has no dependence on $p$
(the former estimate depends on $p$ but, paradoxally, is worse than this new
one). This suggest the following open problems:

1) Are the optimal constants of the Hardy--Littlewood inequality and
Bohnenblust--Hille inequalities the same?

2) Are the optimal constants of the Hardy--Littlewood inequality independent of
$p$ (at least for large $p$)?

\end{abstract}
\maketitle

\section{Introduction}

\label{sec1}

Let $\mathbb{K}$ be $\mathbb{R}$ or $\mathbb{C}$ and $m\geq2$ be a positive
integer. In 1931, F. Bohnenblust and E. Hille (see \cite{bh}) proved in the \textit{Annals of Mathematics} that
there exists a constant $B_{\mathbb{K},m}^{\mathrm{mult}}\geq1$ such that for
all continuous $m$--linear forms $T:\ell_{\infty}^{n}\times\cdots\times
\ell_{\infty}^{n}\rightarrow\mathbb{K}$, and all positive integers $n$,%
\begin{equation}
\left(  \sum_{j_{1},...,j_{m}=1}^{n}\left\vert T(e_{j_{1}},...,e_{j_{m}%
})\right\vert ^{\frac{2m}{m+1}}\right)  ^{\frac{m+1}{2m}}\leq B_{\mathbb{K}%
,m}^{\mathrm{mult}}\left\Vert T\right\Vert . \label{tttt}%
\end{equation}
The task of estimating the constants $B_{\mathbb{K},m}^{\mathrm{mult}}$ of
this inequality (now known as the Bohnenblust--Hille inequality) is nowadays a
challenging problem in Mathematical Analysis. For complex scalars, having good
estimates for the polynomial version of $B_{\mathbb{K},m}^{\mathrm{mult}}$ is
crucial in applications in Complex Analysis and Analytic Number Theory (see
\cite{annalss}); for real scalars, the estimates of $B_{\mathbb{R}%
,m}^{\mathrm{mult}}$ are important in Quantum Information Theory (see
\cite{monta}). In the last years a series of papers related to the
Bohnenblust--Hille inequality have been published and several advances were
achieved (see \cite{alb,da, annalss,dps, Nuuu, psss,q} and the references
therein). For instance, the subexponentiality of the constants of the
polynomial version of the Bohnenblust--Hille inequality (case of complex
scalars) was recenly used in \cite{bohr} to obtain the asymptotic growth of
the Bohr radius of the $n$-dimensional polydisk. More precisely, according to
Boas and Khavinson \cite{BK97}, the Bohr radius $K_{n}$ of the $n$-dimensional
polydisk is the largest positive number $r$ such that all polynomials
$\sum_{\alpha}a_{\alpha}z^{\alpha}$ on $\mathbb{C}^{n}$ satisfy
\[
\sup_{z\in r\mathbb{D}^{n}}\sum_{\alpha}|a_{\alpha}z^{\alpha}|\leq\sup
_{z\in\mathbb{D}^{n}}\left\vert \sum_{\alpha}a_{\alpha}z^{\alpha}\right\vert
.
\]
The Bohr radius $K_{1}$ was estimated by H. Bohr, and it was later shown
independently by M. Riesz, I. Schur and F. Wiener that $K_{1}=1/3$ (see
\cite{BK97, bbohr} and the references therein). For $n\geq2$, exact values of
$K_{n}$ are unknown. In \cite{bohr}, the subexponentiality of the constants of
the complex polynomial version of the Bohnenblust--Hille inequality was proved
and using this fact it was finally proved that%
\[
\lim_{n\rightarrow\infty}\frac{K_{n}}{\sqrt{\frac{\log n}{n}}}=1,
\]
solving a challenging problem that many researchers have been chipping away at
for several years.

The best known estimates for the constants in (\ref{tttt}), which are recently
presented in \cite{bohr}, are
\begin{equation}%
\begin{array}
[c]{llll}%
\mathrm{B}_{\mathbb{C},m}^{\mathrm{mult}} & \leq & \displaystyle\prod
\limits_{j=2}^{m}\Gamma\left(  2-\frac{1}{j}\right)  ^{\frac{j}{2-2j}}, &
\vspace{0.2cm}\\
\mathrm{B}_{\mathbb{R},m}^{\mathrm{mult}} & \leq & 2^{\frac{446381}%
{55440}-\frac{m}{2}}\displaystyle\prod\limits_{j=14}^{m}\left(  \frac
{\Gamma\left(  \frac{3}{2}-\frac{1}{j}\right)  }{\sqrt{\pi}}\right)
^{\frac{j}{2-2j}}, & \text{ for }m\geq14,\vspace{0.2cm}\\
\mathrm{B}_{\mathbb{R},m}^{\mathrm{mult}} & \leq & \displaystyle\prod
\limits_{j=2}^{m}2^{\frac{1}{2j-2}}, & \text{ for }2\leq m\leq13.
\end{array}
\label{khfr}%
\end{equation}

In a more friendly presentation the above formulas tell us that the growth of
the constants $\mathrm{B}_{\mathbb{K},m}^{\mathrm{mult}}$ is subpolynomial (in
fact, sublinear) since, from the above estimates it can be proved that (see
\cite{bohr})
\[%
\begin{array}
[c]{rcl}%
\mathrm{B}_{\mathbb{C},m}^{\mathrm{mult}} & < & \displaystyle m^{\frac
{1-\gamma}{2}}<m^{0.21139},\vspace{0.2cm}\\
\mathrm{B}_{\mathbb{R},m}^{\mathrm{mult}} & < & 1.3\cdot m^{\frac
{2-\log2-\gamma}{2}}<1.3\cdot m^{0.36482},
\end{array}
\]
where $\gamma$ denotes the Euler--Mascheroni constant. This is a quite
surprising result since the previous estimates (from 1931-2011) predicted an
exponential growth; it was only in 2012, with \cite{psss}, motivated by
\cite{dps}, that the panorama started to change.

The Hardy--Littlewood inequality is a natural extension of the
Bohnenblust--Hille inequality (see \cite{alb,hardy,pra}) and asserts that for
$4\leq2m\leq p\leq\infty$ there exists a constant $C_{m,p}^{\mathbb{K}}\geq1$ such
that, for all continuous $m$--linear forms $T:\ell_{p}^{n}\times\cdots
\times\ell_{p}^{n}\rightarrow\mathbb{K}$, and all positive integers $n$,
\[
\left(  \sum_{j_{1},...,j_{m}=1}^{n}\left\vert T(e_{j_{1}},...,e_{j_{m}%
})\right\vert ^{\frac{2mp}{mp+p-2m}}\right)  ^{\frac{mp+p-2m}{2mp}}\leq
C_{m,p}^{\mathbb{K}}\left\Vert T\right\Vert ,
\]
and the exponent $\frac{2mp}{mp+p-2m}$ is optimal.

\begin{figure}[h]
\centering
\includegraphics[scale=0.8]{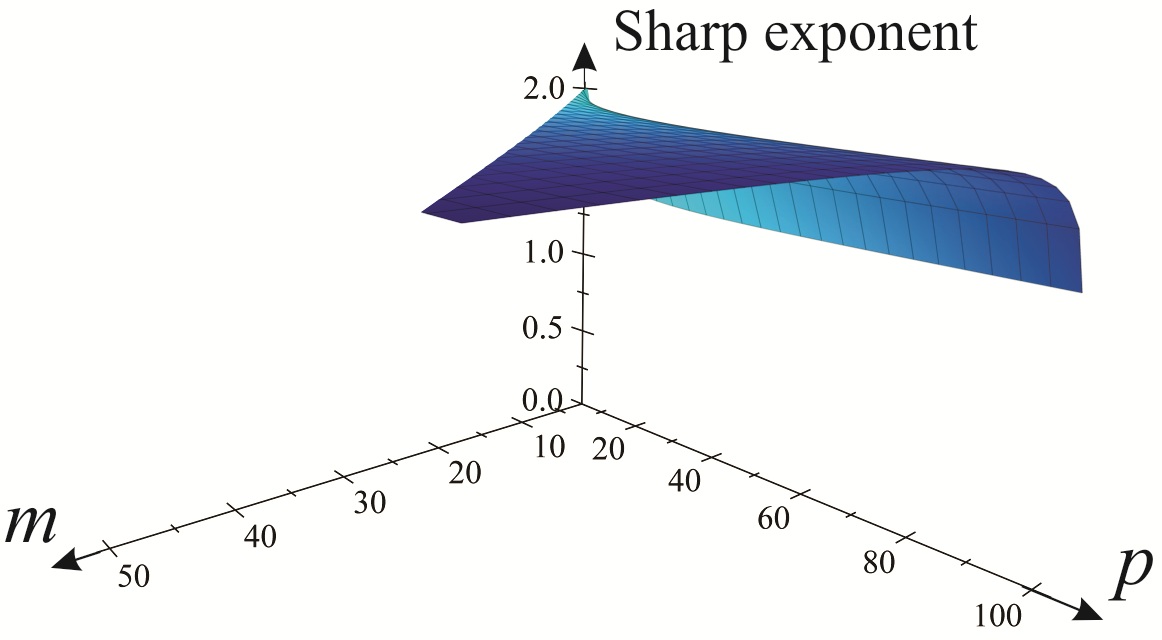}
\end{figure}

The original estimates for
$C_{m,p}^{\mathbb{K}}$ were of the form
\begin{equation}
C_{m,p}^{\mathbb{K}}\leq\left(  \sqrt{2}\right)  ^{m-1} \label{8u7}%
\end{equation}
(see \cite{alb}); nowadays the best known estimates for the constants
$C_{m,p}^{\mathbb{K}}$ (see \cite{ap}) are
\begin{equation}%
\begin{array}
[c]{llll}%
C_{m,p}^{\mathbb{C}} & \leq\left(  \frac{2}{\sqrt{\pi}}\right)  ^{\frac
{2m(m-1)}{p}}\left(  \displaystyle\prod\limits_{j=2}^{m}\Gamma\left(
2-\frac{1}{j}\right)  ^{\frac{j}{2-2j}}\right)  ^{\frac{p-2m}{p}}, &
\vspace{0.2cm} & \\
C_{m,p}^{\mathbb{R}} & \leq\left(  \sqrt{2}\right)  ^{\frac{2m\left(
m-1\right)  }{p}}\left(  2^{\frac{446381}{55440}-\frac{m}{2}}%
\displaystyle\prod\limits_{j=14}^{m}\left(  \frac{\Gamma\left(  \frac{3}%
{2}-\frac{1}{j}\right)  }{\sqrt{\pi}}\right)  ^{\frac{j}{2-2j}}\right)
^{\frac{p-2m}{p}}, & \text{ for }m\geq14,\vspace{0.2cm} & \\
C_{m,p}^{\mathbb{R}} & \leq\left(  \sqrt{2}\right)  ^{\frac{2m\left(
m-1\right)  }{p}}\left(  \displaystyle\prod\limits_{j=2}^{m}2^{\frac{1}{2j-2}%
}\right)  ^{\frac{p-2m}{p}}, & \text{ for }2\leq m\leq13. &
\end{array}
\label{btt}%
\end{equation}
Note that the presence of the parameter $p$ in the formulas of (\ref{btt}), if
compared to (\ref{8u7}), catches more subtle information since now it is clear
that the estimates become \textquotedblleft better\textquotedblright\ as $p$
grows. As $p$ tends to infinity we note that the above estimates tend to best
known estimates for $B_{\mathbb{K},m}^{\mathrm{mult}}$ (see \eqref{khfr}). In
this paper, among other results, we show that for $p > 2m^{3}-4m^{2}+2m$ the
constant $C_{m,p}^{\mathbb{K}}$ has the exactly same upper bounds that we have
for the Bohnenblust--Hille constants \eqref{khfr}. More precisely we shall
show that if $p > 2m^{3}-4m^{2}+2m$, then%
\begin{equation}%
\begin{array}
[c]{llll}%
C_{m,p}^{\mathbb{C}} & \leq & \displaystyle\prod\limits_{j=2}^{m}\Gamma\left(
2-\frac{1}{j}\right)  ^{\frac{j}{2-2j}}, & \vspace{0.2cm}\\
C_{m,p}^{\mathbb{R}} & \leq & 2^{\frac{446381}{55440}-\frac{m}{2}%
}\displaystyle\prod\limits_{j=14}^{m}\left(  \frac{\Gamma\left(  \frac{3}%
{2}-\frac{1}{j}\right)  }{\sqrt{\pi}}\right)  ^{\frac{j}{2-2j}}, & \text{ for
}m\geq14,\vspace{0.2cm}\\
C_{m,p}^{\mathbb{R}} & \leq & \displaystyle\prod\limits_{j=2}^{m}2^{\frac
{1}{2j-2}}, & \text{ for }2\leq m\leq13.
\end{array}
\label{9ww}%
\end{equation}

It is not difficult to verify that (\ref{9ww}) in fact improves (\ref{btt}).
However the most interesting point is that in (\ref{9ww}), contrary to
(\ref{btt}), we have no dependence on $p$ in the formulas and, besides, these
new estimates are precisely the best known estimates for the constants of the
Bohnenblust--Hille inequality (see \eqref{khfr}).

To prove these new estimates we also improve the best known estimates for the
generalized Bohnenblust--Hille inequality. Recall that the generalized
Bohnenblust--Hille inequality (see \cite{alb}) asserts that if $\left(
q_{1},\ldots,q_{m}\right)  \in\lbrack1,2]^{m}$ are so that%
\[
\frac{1}{q_{1}}+\cdots+\frac{1}{q_{m}}=\frac{m+1}{2},
\]
then there is $B_{m,\left(  q_{1},...,q_{m}\right)  }^{\mathbb{K}}\geq1$ such
that%
\[
\left(  \sum_{j_{1}=1}^{n}\dots\left(  \sum_{j_{m}=1}^{n}\left\vert T\left(
e_{j_{1}},...,e_{j_{m}}\right)  \right\vert ^{q_{m}}\right)  ^{\frac{q_{m-1}%
}{q_{m}}}\dots\right)  ^{\frac{1}{q_{1}}}\leq B_{m,\left(  q_{1}%
,...,q_{m}\right)  }^{\mathbb{K}}\Vert T\Vert
\]
for all $m$-linear forms $T:\ell_{\infty}^{n}\times\cdots\times\ell_{\infty
}^{n}\rightarrow\mathbb{K}$, and all positive integers $n.$ The importance of
this result trancends the intrinsic mathematical novelty since, as it was
recently shown (see \cite{bohr}), this new approach is fundamental to improve
the estimates of the constants of the classical Bohnenblust--Hille inequality.
The best known estimates for the constants $B_{m,\left(  q_{1},...,q_{m}%
\right)  }^{\mathbb{K}}$ are presented in \cite{n}. More precisely, for
complex scalars and $1\leq q_{1}\leq\cdots\leq q_{m}\leq2$, from \cite{n} we
know that, for $\mathbf{q}=\left(  q_{1},...,q_{m}\right)  $,%

\[
B_{m,\mathbf{q} }^{\mathbb{C}}\leq\left(  \displaystyle\prod\limits_{j=1}%
^{m}\Gamma\left(  2-\frac{1}{j}\right)  ^{\frac{j}{2-2j}}\right)  ^{2m\left(
\frac{1}{q_{m}}-\frac{1}{2}\right)  }\left(  \prod_{k=1}^{m-1}\left(
\Gamma\left(  \frac{3k+1}{2k+2}\right)  ^{\left(  \frac{-k-1}{2k}\right)
\left(  m-k\right)  }\displaystyle\prod\limits_{j=1}^{k}\Gamma\left(
2-\frac{1}{j}\right)  ^{\frac{j}{2-2j}}\right)  ^{2k\left(  \frac{1}{q_{k}%
}-\frac{1}{q_{k+1}}\right)  }\right)  .
\]

In the present paper we improve the above estimates for a certain family of
$\left(  q_{1},...,q_{m}\right)  $. More precisely, if
\[
\max q_{i}<\frac{2m^{2}-4m+2}{m^{2}-m-1},
\]
then%
\[
B_{m,\left(  q_{1},...,q_{m}\right)  }^{\mathbb{C}}\leq\displaystyle\prod
\limits_{j=2}^{m}\Gamma\left(  2-\frac{1}{j}\right)  ^{\frac{j}{2-2j}}.
\]
A similar result holds for real scalars. These results have a crucial
importance in the next sections.

The paper is organized as follows. In Section \ref{sec2} we obtain new
estimates for the generalized Bohnenblust--Hille inequality and in Section
\ref{sec3} we use these estimates to prove new estimates for the constants of
the Hardy-Littlewood inequality. In the final section (Section \ref{sec4}) the
estimates of the previous sections are used to obtain new constants for the
generalized Hardy--Littlewood inequality.

\section{New estimates for the constants of the generalized Bohnenblust--Hille inequality}

\label{sec2}

We recall that the Khinchine inequality (see \cite{Di}) asserts that for any
$0<q<\infty$, there are positive constants $A_{q}$, $B_{q}$ such that
regardless of the scalar sequence $(a_{j})_{j=1}^{n}$ we have
\[
A_{q}\left(  \sum_{j=1}^{n}|a_{j}|^{2}\right)  ^{\frac{1}{2}}\leq\left(
\int_{0}^{1}\left\vert \sum_{j=1}^{n}a_{j}r_{j}(t)\right\vert ^{q}dt\right)
^{\frac{1}{q}}\leq B_{q}\left(  \sum_{j=1}^{n}|a_{j}|^{2}\right)  ^{\frac
{1}{2}},
\]
where $r_{j}$ are the Rademacher functions. More generally, from the above
inequality together with the Minkowski inequality we know that (see \cite{ap},
for instance, and the references therein)
\begin{equation}
A_{q}^{m}\left(  \sum_{j_{1},...,j_{m}=1}^{n}|a_{j_{1}...j_{m}}|^{2}\right)
^{\frac{1}{2}}\leq\left(  \int_{I}\left\vert \sum_{j_{1},...,j_{m}=1}%
^{n}a_{j_{1}...j_{m}}r_{j_{1}}(t_{1})...r_{j_{m}}(t_{m})\right\vert
^{q}dt\right)  ^{\frac{1}{q}}\leq B_{q}^{m}\left(  \sum_{j_{1},...,j_{m}%
=1}^{n}|a_{j_{1}...j_{m}}|^{2}\right)  ^{\frac{1}{2}}, \label{mmjj}%
\end{equation}
where $I=[0,1]^{m}$ and $dt=dt_{1}...dt_{m}$, for all scalar sequences
$\left(  a_{j_{1}....j_{m}}\right)  _{j_{1},...,j_{m}=1}^{n}$.

The best constants $A_{q}$ are known (see \cite{haage}). Indeed,

\begin{itemize}
\item $\displaystyle A_{q}=\sqrt{2}\left(  \frac{\Gamma\left(
\frac{1+q}{2}\right)  }{\sqrt{\pi}}\right)  ^{\frac{1}{q}}$ if $q>q_{0}\cong1.8474$;
\vspace{0.2cm}

\item $\displaystyle A_{q}=2^{\frac{1}{2}-\frac{1}{q}}$ if $q<q_{0}$.
\end{itemize}

The definition of the number $q_{0}$ above is the following: $q_{0}\in(1,2)$
is the unique real number with
\[
\Gamma\left(  \frac{p_{0}+1}{2}\right)  =\frac{\sqrt{\pi}}{2}.
\]
For complex scalars, using Steinhaus variables instead of Rademacher functions
it is well known that a similar inequality holds, but with better
constants\ (see \cite{konig, sawa}). In this case the optimal constant is

\begin{itemize}
\item $\displaystyle A_{q}=\Gamma\left(  \frac{q+2}{2}\right)  ^{\frac{1}{q}}$ if
$q\in\lbrack1,2]$.
\end{itemize}

The notation of the constant $A_{q}$ above will be used in all this paper.

\begin{lemma}
\label{81} Let $\left(  q_{1},\ldots,q_{m}\right)  \in\lbrack1,2]^{m}$ such
that $\frac{1}{q_{1}}+\cdots+\frac{1}{q_{m}}=\frac{m+1}{2}$. If $q_{i}%
\geq\frac{2m-2}{m}$ for some index $i$ and $q_{k}=q_{l}$ for all $k\neq i$ and
$l\neq i$, then
\[
B_{m,\left(  q_{1},...,q_{m}\right)  }^{\mathbb{K}}\leq\prod\limits_{j=2}%
^{m}A_{\frac{2j-2}{j}}^{-1},
\]
where $A_{\frac{2j-2}{j}}$ are the respective constants of the Khnichine inequality.
\end{lemma}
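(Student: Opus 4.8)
The plan is to reduce the mixed-exponent estimate to an iterated application of the classical Bohnenblust--Hille inequality with exponent $\frac{2m}{m+1}$ combined with a Minkowski-type inequality. Assume without loss of generality (using the symmetry of the sum in the indices and the hypothesis $q_k=q_l$ for $k,l\neq i$) that $i=1$, so that $q_1\geq\frac{2m-2}{m}$ and $q_2=\cdots=q_m=:q$, with the constraint $\frac{1}{q_1}+\frac{m-1}{q}=\frac{m+1}{2}$. One checks that the condition $q_1\geq\frac{2m-2}{m}$ forces $q\leq 2$ automatically, and in fact $q=\frac{2(m-1)q_1}{(m+1)q_1-2}$; I would record these relations first.

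The core of the argument is an interpolation/inclusion step. First I would apply the Khinchine inequality in the ``last'' $m-1$ variables: writing $a_{j_1\dots j_m}=T(e_{j_1},\dots,e_{j_m})$ and using \eqref{mmjj} in the variables $t_2,\dots,t_m$, one bounds the $\ell_q$-norm in $j_2,\dots,j_m$ of $(a_{j_1\dots j_m})$ by $A_{q}^{-(m-1)}$ times the $L_q([0,1]^{m-1})$-norm of $\sum a_{j_1\dots j_m} r_{j_2}(t_2)\cdots r_{j_m}(t_m)$. The outer $\ell_{q_1}$-sum over $j_1$ is then handled by viewing it through the classical Bohnenblust--Hille inequality: for each fixed $(t_2,\dots,t_m)$ the $(m-1)$-linear-in-Rademacher object is itself a bounded $m$-linear form in the original variables (more precisely, the vector-valued structure lets us apply the $\left(\frac{2m}{m+1},\dots,\frac{2m}{m+1}\right)$-estimate), and the exponent bookkeeping $\frac{1}{q_1}=\frac{m+1}{2}-\frac{m-1}{q}$ is exactly what makes the numbers match after one inclusion $\ell_{\frac{2m}{m+1}}\hookrightarrow\ell_{q_1}$ (valid since $q_1\geq\frac{2m-2}{m}\geq\frac{2m}{m+1}$ is false in general — so more care is needed here). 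In fact the cleaner route is the standard one from \cite{alb,bohr}: use the multiple Khinchine inequality to pass all the way to the $\ell_2$-sum with constant $\prod_{j}A_{(2j-2)/j}^{-1}$ via the telescoping scheme of Blei/Defant, and observe that the specific tuple $(q_1,q,\dots,q)$ sits on the ``main diagonal'' of that interpolation scheme precisely when $q_1\geq\frac{2m-2}{m}$, so that no term worse than $A_{(2j-2)/j}^{-1}$ ever appears.

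Concretely, I would proceed by induction on $m$. The base case $m=2$ is the inequality $B_{2,(q_1,q_2)}\leq A_{1}^{-1}=A_{2/2}^{-1}$ (all admissible exponent pairs with $\frac1{q_1}+\frac1{q_2}=\frac32$ give the same constant as the $\left(\frac43,\frac43\right)$ Bohnenblust--Hille constant, which is $\sqrt2=A_1^{-1}$ in the real case and $2/\sqrt\pi=A_1^{-1}$ in the complex case). For the inductive step I would fix the variable carrying the large exponent, apply Khinchine in that single variable to replace $\ell_{q_1}$ by an $L_{q_1}$-norm of a Rademacher sum (losing a factor $A_{q_1}^{-1}\le A_{(2m-2)/m}^{-1}$, using monotonicity of $A$ and $q_1\ge\frac{2m-2}{m}$), then apply Minkowski to pull the remaining $(m-1)$-fold mixed norm inside, and finally invoke the induction hypothesis on the resulting $(m-1)$-linear problem whose exponent tuple $(q_2,\dots,q_m)=(q,\dots,q)$ satisfies $\frac1{q_2}+\cdots+\frac1{q_m}=\frac{m}{2}$ and $q\ge\frac{2(m-1)-2}{m-1}$ — the last inequality is exactly equivalent, via the constraint, to $q_1\ge\frac{2m-2}{m}$, which is why the hypothesis is phrased that way.

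The main obstacle I anticipate is verifying that the exponent arithmetic genuinely closes the induction: one must show that the hypothesis ``$q_i\geq\frac{2m-2}{m}$ with all other $q$'s equal'' for the $m$-linear problem implies the analogous hypothesis ``$q\geq\frac{2(m-1)-2}{m-1}$'' for the $(m-1)$-linear problem after removing the distinguished variable, and that the accumulated constant $A_{q_1}^{-1}\cdot\prod_{j=2}^{m-1}A_{(2j-2)/j}^{-1}$ is bounded by $\prod_{j=2}^{m}A_{(2j-2)/j}^{-1}$, i.e. that $A_{q_1}^{-1}\le A_{(2m-2)/m}^{-1}$, which holds because $q\mapsto A_q$ is nondecreasing and $q_1\ge\frac{2m-2}{m}$. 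The only subtlety is that when $q_1=2$ (forcing one of the later exponents possibly below the needed threshold in a degenerate way) one should check the edge cases directly; and one must confirm the Minkowski step is legitimate, which requires $q_1\ge q$ — but $q_1\ge\frac{2m-2}{m}$ and the constraint give $q\le\frac{2m-2}{m}\le q_1$, so this is fine. I do not expect any analytic difficulty beyond this bookkeeping, since all the hard inequalities (Khinchine with best constants, Minkowski, the classical BH inequality) are already available.
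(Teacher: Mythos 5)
There is a genuine gap in the inductive argument you propose, and a couple of the auxiliary claims are in fact false.

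First, the exponent arithmetic in the inductive step does not close. If $q_1\ge\frac{2m-2}{m}$ and $q_2=\cdots=q_m=:q$ with $\frac1{q_1}+\frac{m-1}{q}=\frac{m+1}{2}$, then $\frac{m-1}{q}=\frac{m+1}{2}-\frac1{q_1}$, which equals $\frac{m}{2}$ \emph{only} when $q_1=2$. For $q_1<2$ the tuple $(q,\dots,q)$ has $\frac1{q_2}+\cdots+\frac1{q_m}<\frac{m}{2}$, so it is not an admissible $(m-1)$-linear Bohnenblust--Hille exponent and the induction hypothesis does not apply to it. Your stated identity ``$\frac1{q_2}+\cdots+\frac1{q_m}=\frac m2$'' is incorrect in general.

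Second, the Khinchine step you describe is not what the Khinchine inequality provides. Khinchine (and its multiple version \eqref{mmjj}) controls the $\ell_2$-norm of the coefficients by the $L_r$-norm of a Rademacher (or Steinhaus) sum, for arbitrary $r$; it does not turn an $\ell_{q_1}$-norm into an $L_{q_1}$-norm of a Rademacher sum. The inclusion $\ell_{q_1}\hookrightarrow\ell_2$ for $q_1\le2$ goes in the wrong direction for what you need (it gives $\|\cdot\|_{\ell_2}\le\|\cdot\|_{\ell_{q_1}}$). So ``apply Khinchine in the variable carrying the exponent $q_1$'' does not produce the bound you claim, and the purported constant $A_{q_1}^{-1}$ never appears.

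Third, your check ``$q\le\frac{2m-2}{m}\le q_1$, so Minkowski is fine'' is wrong. Solving $q=q_1$ in the constraint gives $q_1=\frac{2m}{m+1}$, and $\frac{2m}{m+1}>\frac{2m-2}{m}$; hence for $q_1\in[\frac{2m-2}{m},\frac{2m}{m+1})$ one actually has $q>q_1$. At the left endpoint $q_1=\frac{2m-2}{m}$ one gets $q=\frac{2(m-1)^2}{m^2-m-1}>\frac{2m-2}{m}$. So the mixed-norm Minkowski step does not have the monotonicity you assert in precisely part of the range the lemma covers.

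The paper's proof avoids all of this by treating the exponent $2$, not $q_1$, as the distinguished one: it first establishes the constant $\prod_{j=2}^mA_{(2j-2)/j}^{-1}$ for the exponent $(\frac{2m-2}{m},\dots,\frac{2m-2}{m},2)$ via the multiple Khinchine inequality (applied to the $\ell_2$ index) together with the $(m-1)$-linear Bohnenblust--Hille inequality and the recursion $B^{\mathrm{mult}}_{\mathbb K,m}\le A_{(2m-2)/m}^{-1}B^{\mathrm{mult}}_{\mathbb K,m-1}$; it then transfers this to all $m$ cyclic permutations of that exponent by Minkowski (legitimate because $\frac{2m-2}{m}<2$); and finally it writes $(q_1^{-1},\dots,q_m^{-1})$ as a convex combination of those $m$ extreme points — with weights $\theta_1=\cdots=\theta_{m-1}=\frac{2}{q_1}-1$, $\theta_m=m-\frac{2m-2}{q_1}$, all in $[0,1]$ precisely because $q_1\in[\frac{2m-2}{m},2]$ — and invokes the interpolation from \cite{alb}. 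To repair your argument you would need this interpolation step (or an equivalent device); the direct induction on $m$ as written does not go through.
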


\begin{proof}
There is no loss of generality in supposing that $i=1$. By using the multiple
Khinchine inequality (\ref{mmjj}) we have (see \cite[Section 2]{ap} for
details)
\[
\left(  \sum\limits_{j_{1},...,j_{m-1}=1}^{n}\left(  \sum\limits_{j_{m}=1}%
^{n}\left\vert T\left(  e_{j_{1}},...,e_{j_{m}}\right)  \right\vert
^{2}\right)  ^{\frac{1}{2}\frac{2m-2}{m}}\right)  ^{\frac{m}{2m-2}}\leq
A_{\frac{2m-2}{m}}^{-1}B_{\mathbb{K},m-1}^{\mathrm{mult}}\left\Vert
T\right\Vert .
\]
From \cite{bohr} we know that
\[
B_{\mathbb{K},1}^{\mathrm{mult}}=1\qquad\text{ and }\qquad B_{\mathbb{K}%
,m}^{\mathrm{mult}}\leq A_{\frac{2m-2}{m}}^{-1}B_{\mathbb{K},m-1}%
^{\mathrm{mult}},
\]
and thus
\[
A_{\frac{2m-2}{m}}^{-1}B_{\mathbb{K},m-1}^{\mathrm{mult}}=\prod\limits_{j=2}%
^{m}A_{\frac{2j-2}{j}}^{-1}.%
\]

Taking the $m$ exponents%
\[%
\begin{array}
[c]{c}%
\displaystyle\left(  \frac{2m-2}{m},...,\frac{2m-2}{m},2\right)  ,\\
\vdots\\
\displaystyle\left(  2,\frac{2m-2}{m},....,\frac{2m-2}{m}\right)  ,
\end{array}
\]
interpolated (in the sense of \cite{alb}) with $\theta_{1}=\cdots=\theta
_{m-1}=\frac{2}{q_{1}}-1$ and $\theta_{m}=m-\frac{2m-2}{q_{1}},$ we conclude
that the exponent obtained is $\left(  q_{1},...,q_{m}\right)  $. Since
$\frac{2m-2}{m}<2,$ from a repeated use of the Minkowski inequality (in the lines of the arguments from \cite{alb}) we know that the constants associated to all the above exponents
are dominated by $\prod\limits_{j=2}^{m}A_{\frac{2j-2}{j}}^{-1},$ and the
proof is done.
\end{proof}

From now on, for any function $f$, whenever it makes sense we formally define
$f(\infty)=\lim_{p\to\infty}f(p)$.

\begin{lemma}
\label{lema} Let $m\geq2$ be a positive integer, let $2m<p\leq\infty$, let
$q_{1},...,q_{m}\in\left[  \frac{p}{p-m},2\right]  $. If
\begin{equation}
\frac{1}{q_{1}}+\cdots+\frac{1}{q_{m}}=\frac{mp+p-2m}{2p}, \label{3121}%
\end{equation}
then, for all $s\in\left(  \max q_{i},2\right]  $, the vector $\left(
q_{1}^{-1},...,q_{m}^{-1}\right)  $ belongs to the convex hull in
$\mathbb{R}^{m}$ of
\[
\left\{  \sum\limits_{k=1}^{m}a_{1k}e_{k},...,\sum\limits_{k=1}^{m}a_{mk}%
e_{k}\right\}  ,
\]
where
\[
a_{jk}=\left\{
\begin{array}
[c]{ll}%
s^{-1}, & \text{ if }k\neq j\\
\lambda_{m,s}^{-1}, & \text{ if }k=j
\end{array}
\right.
\]
and
\[
\lambda_{m,s}=\frac{2ps}{mps+ps+2p-2mp-2ms}.
\]
Equivalently, we say that the exponent $\left(  q_{1},...,q_{m}\right)  $ is
the interpolation of the $m$ exponents $\left(  s,...,s,\lambda_{m,s}\right),$ ...,$\left(  \lambda_{m,s},s,...,s\right)$.
\end{lemma}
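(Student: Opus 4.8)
The plan is to exhibit $(q_{1}^{-1},\dots ,q_{m}^{-1})$ as an \emph{explicit} convex combination of the $m$ given vertices, and then to check the only two things that need checking: that the coefficients are nonnegative, and that they add up to $1$. Put $v_{j}=\sum_{k=1}^{m}a_{jk}e_{k}$; by the definition of the $a_{jk}$ one has $v_{j}=s^{-1}(e_{1}+\dots +e_{m})+(\lambda_{m,s}^{-1}-s^{-1})e_{j}$. Hence, for nonnegative weights $\theta_{1},\dots ,\theta_{m}$, the $k$-th coordinate of $\sum_{j=1}^{m}\theta_{j}v_{j}$ equals $(1-\theta_{k})s^{-1}+\theta_{k}\lambda_{m,s}^{-1}$, and imposing $(1-\theta_{k})s^{-1}+\theta_{k}\lambda_{m,s}^{-1}=q_{k}^{-1}$ for every $k$ leaves a single candidate,
\[
\theta_{k}=\frac{q_{k}^{-1}-s^{-1}}{\lambda_{m,s}^{-1}-s^{-1}},\qquad k=1,\dots ,m .
\]
So the whole proof reduces to verifying that these numbers are $\geq 0$ and that $\theta_{1}+\dots +\theta_{m}=1$.

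For the normalization I would first simplify the denominator. A term-by-term division of $mps+ps+2p-2mp-2ms$ by $2ps$ gives $\lambda_{m,s}^{-1}=\frac{m+1}{2}-\frac{m-1}{s}-\frac{m}{p}$, whence $\lambda_{m,s}^{-1}-s^{-1}=\frac{m+1}{2}-\frac{m}{s}-\frac{m}{p}$. On the other hand, summing the numerators and using hypothesis \eqref{3121} together with the identity $\frac{mp+p-2m}{2p}=\frac{m+1}{2}-\frac{m}{p}$ yields $\sum_{k=1}^{m}\bigl(q_{k}^{-1}-s^{-1}\bigr)=\frac{m+1}{2}-\frac{m}{p}-\frac{m}{s}$, which is exactly the common denominator; therefore $\sum_{k=1}^{m}\theta_{k}=1$, with no further hypothesis needed.

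It remains to check nonnegativity, and here is the only point where the constraint on $s$ is actually used. Each numerator is positive because $s>\max_{i}q_{i}\geq q_{k}$ gives $q_{k}^{-1}>s^{-1}$. For the common denominator, $s>\max_{i}q_{i}$ gives $s^{-1}<\min_{i}q_{i}^{-1}\leq\frac{1}{m}\sum_{i=1}^{m}q_{i}^{-1}=\frac{m+1}{2m}-\frac{1}{p}$, i.e. $\frac{m}{s}<\frac{m+1}{2}-\frac{m}{p}$, so $\lambda_{m,s}^{-1}-s^{-1}>0$. Hence every $\theta_{k}>0$ and $\sum_{k}\theta_{k}=1$, which is precisely the assertion; moreover $\lambda_{m,s}^{-1}>s^{-1}$ together with $s\leq 2$ forces $1\leq\lambda_{m,s}<s\leq 2$, so $\lambda_{m,s}$ is an admissible exponent and the reformulation ``$(q_{1},\dots ,q_{m})$ is the interpolation of $(s,\dots ,s,\lambda_{m,s}),\dots ,(\lambda_{m,s},s,\dots ,s)$'' is literally the statement just proved. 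The computation is elementary throughout; the only step that genuinely uses the hypotheses is the positivity of $\lambda_{m,s}^{-1}-s^{-1}$ — obtained by comparing $s^{-1}$ with the average of the $q_{i}^{-1}$ via $s>\max_{i}q_{i}$ — so that is where the (mild) care is needed, and no real obstacle is expected beyond keeping the simplification of $\lambda_{m,s}^{-1}$ straight. (The lower bound $q_{i}\geq\frac{p}{p-m}$ is not needed for this lemma; it only serves to keep the $q_{i}$ inside $[1,2]$ and will matter in the applications.)
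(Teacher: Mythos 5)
Your proof is correct and takes essentially the same route as the paper's: your weights $\theta_{k}=\dfrac{q_{k}^{-1}-s^{-1}}{\lambda_{m,s}^{-1}-s^{-1}}$ are exactly the paper's $\theta_{j,s}=\dfrac{\lambda_{m,s}(s-q_{j})}{q_{j}(s-\lambda_{m,s})}$, and both arguments verify $\sum_k\theta_k=1$ using \eqref{3121} and deduce $\theta_k>0$ from $s>\max_i q_i$. Your additive simplification $\lambda_{m,s}^{-1}=\tfrac{m+1}{2}-\tfrac{m-1}{s}-\tfrac{m}{p}$ is a tidy way to organize the algebra, but the underlying computation and the use of the hypotheses are the same.
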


\begin{proof}
We want to prove that for $\left(  q_{1},...,q_{m}\right)  \in\left[  \frac
{p}{p-m},2\right]  ^{m}$ and $s\in\left(  \max q_{i},2\right]  $ there are
$0<\theta_{j,s}<1$, $j=1,...,m$, such that
\[%
\begin{array}
[c]{c}%
\displaystyle\sum_{j=1}^{m}\theta_{j,s}=1,\vspace{0.2cm}\\
\displaystyle\frac{1}{q_{1}}=\frac{\theta_{1,s}}{\lambda_{m,s}}+\frac
{\theta_{2,s}}{s}+\cdots+\frac{\theta_{m,s}}{s},\vspace{0.2cm}\\
\displaystyle\vdots\vspace{0.2cm}\\
\displaystyle\frac{1}{q_{m}}=\frac{\theta_{1,s}}{s}+\cdots+\frac
{\theta_{m-1,s}}{s}+\frac{\theta_{m,s}}{\lambda_{m,s}}.
\end{array}
\]

Observe initially that from \eqref{3121} we have
\[
\max q_{i}\geq\frac{2mp}{mp+p-2m}.
\]
Note also that for all $s\in\left[\frac{2mp-2p}{mp-2m},2\right]$ we have
\begin{equation}\label{invt1}
mps+ps+2p-2mp-2ms>0 \qquad \text{and} \qquad \frac{p}{p-m}\leq \lambda_{m,s}\leq 2.
\end{equation}

Since $s>\max q_{i}\geq\frac{2mp}{mp+p-2m}>\frac{2mp-2p}{mp-2m}$ (the last inequality is strict because we are not considering the case $p=2m$) it follows that $\lambda_{m,s}$ is well defined for all $s\in\left(\max q_i,2\right]$. Furthermore, for all $s>\frac{2mp}{mp+p-2m}$ it is possible to prove that $\lambda_{m,s}<s$. In fact, $s>\frac{2mp}{mp+p-2m}$ implies $mps+ps-2ms>2mp$ and thus adding $2p$ in both sides of this inequality we can conclude that $$\frac{2ps}{mps+ps+2p-2mp-2ms}<\frac{2ps}{2p}=s,$$ i.e.,
\begin{equation}
\label{lambdams}
\lambda_{m,s}<s.
\end{equation}

For each $j=1,...,m$, consider
\[
\theta_{j,s}=\frac{\lambda_{m,s}\left(  s-q_{j}\right)  }{q_{j}\left(
s-\lambda_{m,s}\right)  }.
\]

Since $\sum_{j=1}^{m}\frac{1}{q_{j}}=\frac{mp+p-2m}{2p}$ we conclude that
\begin{align*}
\sum_{j=1}^{m}\theta_{j,s}  &  =\sum_{j=1}^{m}\frac{\lambda_{m,s}\left(
s-q_{j}\right)  }{q_{j}\left(  s-\lambda_{m,s}\right)  }\\
&  =\frac{\lambda_{m,s}}{s-\lambda_{m,s}}\left(  s\sum_{j=1}^{m}\frac{1}%
{q_{j}}-m\right) \\
&  =1.
\end{align*}
Since by hypothesis $s>\max q_{i}\geq q_{j}$ for all $j=1,...,m$, it follows
that $\theta_{j,s}>0$ for all $j=1,...,m$ and thus
\[
0<\theta_{j,s}<\sum_{j=1}^{m}\theta_{j,s}=1.
\]

Finally, note that
\[
\frac{\theta_{j,s}}{\lambda_{m,s}}+\frac{1-\theta_{j,s}}{s}=\frac
{\frac{\lambda_{m,s}\left(  s-q_{j}\right)  }{q_{j}\left(  s-\lambda
_{m,s}\right)  }}{\lambda_{m,s}}+\frac{1-\frac{\lambda_{m,s}\left(
s-q_{j}\right)  }{q_{j}\left(  s-\lambda_{m,s}\right)  }}{s}=\allowbreak
\frac{1}{q_{j}}.
\]
Therefore
\[%
\begin{array}
[c]{c}%
\displaystyle\frac{1}{q_{1}}=\frac{\theta_{1,s}}{\lambda_{m,s}}+\frac
{\theta_{2,s}}{s}+\cdots+\frac{\theta_{m,s}}{s},\vspace{0.2cm}\\
\displaystyle\vdots\vspace{0.2cm}\\
\displaystyle\frac{1}{q_{m}}=\frac{\theta_{1,s}}{s}+\cdots+\frac
{\theta_{m-1,s}}{s}+\frac{\theta_{m,s}}{\lambda_{m,s}},
\end{array}
\]
and the proof is done.
\end{proof}

Combining the two previous lemmata we have:

\begin{theorem}
\label{zmuzmu}
Let $m\geq2$ be a positive integer and $q_{1},...,q_{m}\in\left[  1,2\right]
$. If
\[
\frac{1}{q_{1}}+\cdots+\frac{1}{q_{m}}=\frac{m+1}{2},
\]
and
\[
\max q_{i}<\frac{2m^{2}-4m+2}{m^{2}-m-1},
\]
then
\[
B_{m,\left(  q_{1},...,q_{m}\right)  }^{\mathbb{K}}\leq\prod\limits_{j=2}%
^{m}A_{\frac{2j-2}{j}}^{-1},
\]
where $A_{\frac{2j-2}{j}}$ are the respective constants of the Khnichine inequality.
\end{theorem}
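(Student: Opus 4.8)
The plan is to derive Theorem~\ref{zmuzmu} by combining Lemma~\ref{81} with Lemma~\ref{lema} applied at $p=\infty$. First I would record two elementary facts: since $\frac{1}{q_1}+\cdots+\frac{1}{q_m}=\frac{m+1}{2}$ with each $q_i\le 2$, one has $\max q_i\ge\frac{2m}{m+1}>\frac{2m-2}{m}$; and $\frac{2m^2-4m+2}{m^2-m-1}\le 2$ for every $m\ge2$, with equality only at $m=2$ (clearing denominators reduces this to $m\ge2$). Hence, under the hypothesis $\max q_i<\frac{2m^2-4m+2}{m^2-m-1}$, the interval $\left(\max q_i,\ \min\left\{2,\frac{2m^2-4m+2}{m^2-m-1}\right\}\right]$ is nonempty, and I fix some $s$ in it. Evaluating the quantity $\lambda_{m,s}$ of Lemma~\ref{lema} at $p=\infty$ gives $\lambda_{m,s}=\frac{2s}{(m+1)s-2(m-1)}$, and a one-line computation shows that each of the $m$-tuples $(s,\dots,s,\lambda_{m,s}),\dots,(\lambda_{m,s},s,\dots,s)$ lies in $[1,2]^m$ and satisfies $\frac{m-1}{s}+\frac{1}{\lambda_{m,s}}=\frac{m+1}{2}$.

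The key numerical check is that this choice of $s$ forces $\lambda_{m,s}\in\left[\frac{2m-2}{m},2\right]$. Indeed, clearing denominators shows that $\lambda_{m,s}\le2$ is equivalent to $s\ge\frac{2m-2}{m}$, which holds because $s>\max q_i\ge\frac{2m}{m+1}>\frac{2m-2}{m}$; and (using $m^2-m-1>0$) that $\lambda_{m,s}\ge\frac{2m-2}{m}$ is equivalent to $s\le\frac{2m^2-4m+2}{m^2-m-1}$, which holds by the choice of $s$. Consequently Lemma~\ref{81} applies to each of the $m$ exponents above, taking the distinguished index $i$ to be the slot carrying $\lambda_{m,s}$, so that $q_i=\lambda_{m,s}\ge\frac{2m-2}{m}$ while the remaining $m-1$ coordinates are all equal to $s$. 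This yields $B_{m,(s,\dots,s,\lambda_{m,s})}^{\mathbb{K}}\le\prod_{j=2}^{m}A_{\frac{2j-2}{j}}^{-1}$ for each of these $m$ exponents.

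To conclude I would invoke Lemma~\ref{lema} at $p=\infty$ directly: it states that $(q_1^{-1},\dots,q_m^{-1})$ lies in the convex hull of the points attached to the exponents $(s,\dots,s,\lambda_{m,s}),\dots,(\lambda_{m,s},s,\dots,s)$, with barycentric weights $\theta_{j,s}=\frac{\lambda_{m,s}(s-q_j)}{q_j(s-\lambda_{m,s})}\in(0,1)$ and $\sum_{j=1}^{m}\theta_{j,s}=1$. By the interpolation argument of \cite{alb} (the same one already used inside the proof of Lemma~\ref{81}), $B_{m,(q_1,\dots,q_m)}^{\mathbb{K}}\le\prod_{j=1}^{m}\left(B_{m,(s,\dots,\lambda_{m,s},\dots,s)}^{\mathbb{K}}\right)^{\theta_{j,s}}\le\left(\prod_{j=2}^{m}A_{\frac{2j-2}{j}}^{-1}\right)^{\sum_{j=1}^{m}\theta_{j,s}}=\prod_{j=2}^{m}A_{\frac{2j-2}{j}}^{-1}$, which is the claimed bound. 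The real-scalar case is identical, since Lemmas~\ref{81} and~\ref{lema} already hold for $\mathbb{K}\in\{\mathbb{R},\mathbb{C}\}$.

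The only delicate point — and the reason the precise constant $\frac{2m^2-4m+2}{m^2-m-1}$ appears in the statement — is checking that the admissible window for $s$ is nonempty exactly under the stated hypothesis, together with the boundary bookkeeping at $m=2$: there $\frac{2m^2-4m+2}{m^2-m-1}=2$, one takes $s=2$ and $\lambda_{m,s}=1$, and the requirement ``$q_k=q_l$ for all $k,l\ne i$'' in Lemma~\ref{81} is vacuous. Everything else is a routine substitution into the two lemmas.
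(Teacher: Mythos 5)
Your argument is correct and follows essentially the same route as the paper's: invoke Lemma~\ref{81} on the $m$ extremal exponents $(s,\dots,s,\lambda_{m,s}),\dots,(\lambda_{m,s},s,\dots,s)$, use Lemma~\ref{lema} at $p=\infty$ to put $(q_1^{-1},\dots,q_m^{-1})$ in their convex hull, and interpolate the constants. The only cosmetic difference is that the paper fixes $s=\frac{2m^{2}-4m+2}{m^{2}-m-1}$ so that $\lambda_{m,s}$ collapses to exactly $\frac{2m-2}{m}$, whereas you allow any admissible $s$ and check $\lambda_{m,s}\ge\frac{2m-2}{m}$ directly; both are fine.
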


\begin{proof}
Let
\[
s=\frac{2m^{2}-4m+2}{m^{2}-m-1}\qquad\text{ and }\qquad q=\frac{2m-2}{m}.
\]
Since
\[
\frac{m-1}{s}+\frac{1}{q}=\frac{m+1}{2},
\]
from Lemma \ref{81} the Bohnenblust--Hille exponents
\[
\left(  t_{1},...,t_{m}\right)  =\left(  s,...,s,q\right)  ,...,\left(
q,s,...,s\right)
\]
are associated to
\[
B_{m,\left(  t_{1},...,t_{m}\right)  }^{\mathbb{K}}\leq\prod\limits_{j=2}%
^{m}A_{\frac{2j-2}{j}}^{-1}.
\]
Since by hypothesis
\[
\max q_{i}<\frac{2m^{2}-4m+2}{m^{2}-m-1}=s,
\]
from the previous lemma (Lemma \ref{lema}) with $p=\infty$, the exponent
$\left(  q_{1},...,q_{m}\right)  $ is the interpolation of
\[
\left(  \frac{2s}{ms+s+2-2m},s,...,s\right)  ,...,\left(  s,...,s,\frac
{2s}{ms+s+2-2m}\right)  .
\]
But note that%
\[
\frac{2s}{ms+s+2-2m}=\frac{2m-2}{m}%
\]
and from Lemma \ref{81} they are associated to the constants
\[
B_{m,\left(  q_{1},...,q_{m}\right)  }^{\mathbb{K}}\leq\prod\limits_{j=2}%
^{m}A_{\frac{2j-2}{j}}^{-1}.
\]
\end{proof}

\begin{corollary}
Let $m\geq2$ be a positive integer and $q_{1},...,q_{m}\in\left[  1,2\right]
$. If
\[
\frac{1}{q_{1}}+\cdots+\frac{1}{q_{m}}=\frac{m+1}{2},
\]
and
\[
\max q_{i}<\frac{2m^{2}-4m+2}{m^{2}-m-1},
\]
then%
\[%
\begin{array}
[c]{llll}%
B_{m,\left(  q_{1},...,q_{m}\right)  }^{\mathbb{C}} & \leq &
\displaystyle\prod\limits_{j=2}^{m}\Gamma\left(  2-\frac{1}{j}\right)
^{\frac{j}{2-2j}}, & \vspace{0.2cm}\\
B_{m,\left(  q_{1},...,q_{m}\right)  }^{\mathbb{R}} & \leq & 2^{\frac
{446381}{55440}-\frac{m}{2}}\displaystyle\prod\limits_{j=14}^{m}\left(
\frac{\Gamma\left(  \frac{3}{2}-\frac{1}{j}\right)  }{\sqrt{\pi}}\right)
^{\frac{j}{2-2j}}, & \text{ for }m\geq14,\vspace{0.2cm}\\
B_{m,\left(  q_{1},...,q_{m}\right)  }^{\mathbb{R}} & \leq &
\displaystyle\prod\limits_{j=2}^{m}2^{\frac{1}{2j-2}}, & \text{ for }2\leq
m\leq13.
\end{array}
\]

\end{corollary}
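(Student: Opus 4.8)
The plan is to derive the Corollary directly from Theorem~\ref{zmuzmu} by simply plugging in the known optimal Khinchine constants. Theorem~\ref{zmuzmu} already gives, under the stated hypotheses $\frac{1}{q_{1}}+\cdots+\frac{1}{q_{m}}=\frac{m+1}{2}$ and $\max q_i<\frac{2m^2-4m+2}{m^2-m-1}$, the bound
\[
B_{m,\left(  q_{1},...,q_{m}\right)  }^{\mathbb{K}}\leq\prod\limits_{j=2}^{m}A_{\frac{2j-2}{j}}^{-1}.
\]
So the only thing left is to evaluate $\prod_{j=2}^{m}A_{\frac{2j-2}{j}}^{-1}$ explicitly in each of the three regimes. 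First I would note that the exponent $\frac{2j-2}{j}=2-\frac{2}{j}$ is an increasing function of $j$, and compare it with the threshold $q_0\cong1.8474$ from the Haagerup values: one checks that $2-\frac{2}{j}<q_0$ precisely for small $j$ (namely $j\leq 13$, since $2-\frac{2}{13}\approx1.846<q_0<1.857\approx2-\frac{2}{14}$), which is exactly where the cutoff $m\leq 13$ versus $m\geq 14$ comes from.

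For complex scalars, I would use the Steinhaus/Steinhaus-variable optimal constant $A_q=\Gamma\!\left(\frac{q+2}{2}\right)^{1/q}$ valid for all $q\in[1,2]$. Substituting $q=\frac{2j-2}{j}$ gives $\frac{q+2}{2}=2-\frac{1}{j}$ and $\frac{1}{q}=\frac{j}{2j-2}$, hence $A_{\frac{2j-2}{j}}^{-1}=\Gamma\!\left(2-\frac{1}{j}\right)^{-\frac{j}{2j-2}}=\Gamma\!\left(2-\frac{1}{j}\right)^{\frac{j}{2-2j}}$, which yields the first line of the Corollary after taking the product over $j=2,\dots,m$.

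For real scalars one uses the Haagerup values of $A_q$. When $2\leq m\leq 13$ every exponent $\frac{2j-2}{j}$ with $2\leq j\leq m$ satisfies $\frac{2j-2}{j}<q_0$, so $A_{\frac{2j-2}{j}}=2^{\frac12-\frac{j}{2j-2}}=2^{-\frac{1}{2j-2}}$, giving $A_{\frac{2j-2}{j}}^{-1}=2^{\frac{1}{2j-2}}$ and hence the third line. When $m\geq 14$ I would split the product at $j=13/14$: for $2\leq j\leq 13$ use $A_{\frac{2j-2}{j}}^{-1}=2^{\frac{1}{2j-2}}$ as before, and for $14\leq j\leq m$ use $A_q=\sqrt{2}\left(\frac{\Gamma\left(\frac{1+q}{2}\right)}{\sqrt\pi}\right)^{1/q}$ with $q=\frac{2j-2}{j}$, so that $\frac{1+q}{2}=\frac{3}{2}-\frac{1}{j}$ and $A_{\frac{2j-2}{j}}^{-1}=2^{-1/2}\left(\frac{\Gamma\left(\frac32-\frac1j\right)}{\sqrt\pi}\right)^{\frac{j}{2-2j}}$. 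Multiplying the $2^{-1/2}$ factors over $14\leq j\leq m$ contributes $2^{-\frac{m-13}{2}}$, and combining this with $\prod_{j=2}^{13}2^{\frac{1}{2j-2}}=2^{\sum_{j=2}^{13}\frac{1}{2j-2}}$ should collapse (by the same bookkeeping already performed in \cite{bohr} to obtain \eqref{khfr}) to the constant $2^{\frac{446381}{55440}-\frac{m}{2}}$, producing the second line. The only mildly delicate point — really just an arithmetic verification rather than a genuine obstacle — is confirming that $\sum_{j=2}^{13}\frac{1}{2j-2}+\frac{13}{2}=\frac{446381}{55440}$ so that the exponents match \eqref{khfr} exactly; since these are precisely the same product manipulations that appear in the derivation of \eqref{khfr}, no new difficulty arises, and the Corollary follows.
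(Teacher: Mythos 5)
Your proposal is correct and matches the paper's (unstated) proof: the Corollary is presented immediately after Theorem~\ref{zmuzmu} with no argument at all, precisely because it follows by substituting the Haagerup/K\"onig--Sawa values of $A_{\frac{2j-2}{j}}$ into $\prod_{j=2}^{m}A_{\frac{2j-2}{j}}^{-1}$, which is exactly what you do, and your arithmetic (including the identity $\frac{1}{2}H_{12}+\frac{13}{2}=\frac{86021}{55440}+\frac{360360}{55440}=\frac{446381}{55440}$ and the threshold check $2-\frac{2}{13}<q_0<2-\frac{2}{14}$) is accurate.
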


\section{Application 1: Improving the constants of the Hardy-Littlewood inequality}

\label{sec3}

The main result of this section shows that for $p > 2m^{3}-4m^{2}+2m$ the
optimal constants satisfying the Hardy--Littlewood inequality for $m$-linear
forms in $\ell_{p}$ spaces are dominated by the best known estimates for the
constants of the $m$-linear Bohnenblust--Hille inequality; this result
improves the recent estimates (see \eqref{btt}), and may suggest a more subtle
connection between the optimal constants of these inequalities.

\begin{theorem}
\label{667}Let $m\geq2$ be a positive integer and $2m^{3}-4m^{2}+2m < p\leq\infty.$ Then, for all continuous $m$--linear forms $T:\ell_{p}^{n}%
\times\cdots\times\ell_{p}^{n}\rightarrow\mathbb{K}$ and all positive integers
$n$, we have%
\begin{equation}
\left(  \sum_{j_{1},...,j_{m}=1}^{n}\left\vert T(e_{j_{1}},...,e_{j_{m}%
})\right\vert ^{\frac{2mp}{mp+p-2m}}\right)  ^{\frac{mp+p-2m}{2mp}}\leq\left(
\prod\limits_{j=2}^{m}A_{\frac{2j-2}{j}}^{-1}\right)  \left\Vert T\right\Vert
. \label{665}%
\end{equation}

\end{theorem}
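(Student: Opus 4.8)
The plan is to reduce the Hardy--Littlewood estimate on $\ell_p^n$ to the generalized Bohnenblust--Hille inequality on $\ell_\infty^n$ via the interpolation machinery of Lemma~\ref{lema}, combined with the constant bound of Lemma~\ref{81} (and hence Theorem~\ref{zmuzmu}). First I would fix the exponent $\rho=\frac{2mp}{mp+p-2m}$ appearing in \eqref{665} and the companion exponent $q=\frac{2m-2}{m}$, together with $s=2$ as the endpoint parameter. The key observation is that the hypothesis $p>2m^3-4m^2+2m$ is precisely what is needed to guarantee, after the computation analogous to \eqref{lambdams} and \eqref{invt1}, that $\rho$ falls below the threshold $\frac{2m^2-4m+2}{m^2-m-1}$ (in the $p=\infty$ limit) and, more generally for finite $p$, that $\lambda_{m,s}$ with $s=2$ lands in the admissible range $\left[\frac{p}{p-m},2\right]$ so that Lemma~\ref{lema} applies.

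The main steps, in order, are: (1) Observe that the single exponent $(\rho,\dots,\rho)$ satisfies $\frac{m}{\rho}=\frac{mp+p-2m}{2p}$, so it is a legitimate target for the interpolation of \eqref{3121} with all $q_i$ equal. (2) Apply Lemma~\ref{lema} with $q_1=\cdots=q_m=\rho$ and $s=2$: this writes $(\rho^{-1},\dots,\rho^{-1})$ as a convex combination of the $m$ vectors coming from the exponents $(\lambda_{m,2},2,\dots,2),\dots,(2,\dots,2,\lambda_{m,2})$, where $\lambda_{m,2}=\frac{4p}{mp+3p+2p-2mp-4m}$ simplifies; one checks $\lambda_{m,2}=\frac{2p}{(m-1)p+2p-(m-1)\cdot 2m/\dots}$ — the point being it equals a value $\ge \frac{2m-2}{m}$, so that Lemma~\ref{81}'s hypothesis $q_i\ge\frac{2m-2}{m}$ for the distinguished index is met. (3) For each of these $m$ mixed exponents $(2,\dots,2,\lambda_{m,2},2,\dots,2)$, invoke Lemma~\ref{81} (in the $\ell_\infty$ setting, since these are Bohnenblust--Hille exponents summing to $\frac{m+1}{2}$) to get the bound $\prod_{j=2}^m A_{\frac{2j-2}{j}}^{-1}$ on each associated constant. (4) Use the inclusion $\ell_p^n\hookrightarrow\ell_\infty^n$ together with the standard interpolative/Minkowski argument (as in \cite{alb,ap}) to transfer the $\ell_\infty$ mixed-norm estimates to the $\ell_p$ setting and combine them by H\"older/interpolation, so that the constant for the target exponent $(\rho,\dots,\rho)$ is dominated by the same product $\prod_{j=2}^m A_{\frac{2j-2}{j}}^{-1}$.

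The step I expect to be the main obstacle is verifying rigorously that $\lambda_{m,2}$ (and, if one works with general $s$ rather than $s=2$, $\lambda_{m,s}$) lies in the correct interval so that all invocations of Lemma~\ref{81} are legitimate — in particular the inequality $\lambda_{m,2}\ge\frac{2m-2}{m}$, which is exactly where the cubic threshold $p>2m^3-4m^2+2m$ enters. This is a finite but somewhat delicate rational-function inequality in $p$ and $m$; I would isolate it as a short computational claim, clearing denominators and checking the sign of the resulting polynomial in $p$ for $p>2m^3-4m^2+2m$. A secondary technical point is making sure the constant tracking through the repeated Minkowski inequality genuinely gives the product $\prod_{j=2}^m A_{\frac{2j-2}{j}}^{-1}$ and not something larger; here I would lean on the fact, already recorded in the proof of Lemma~\ref{81}, that $A_{\frac{2m-2}{m}}^{-1}B_{\mathbb{K},m-1}^{\mathrm{mult}}=\prod_{j=2}^m A_{\frac{2j-2}{j}}^{-1}$, so no extra factors are incurred. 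Once these are in place, \eqref{665} follows immediately, and specializing the constants $A_{\frac{2j-2}{j}}$ to their known values yields \eqref{9ww}.
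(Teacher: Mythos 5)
Your overall blueprint (reduce the $\ell_p$ estimate to a generalized Bohnenblust--Hille estimate on $\ell_\infty$ via the interpolation of Lemma~\ref{lema} and then transfer by the $\ell_\infty\to\ell_p$ induction) is the right one, but the specific choice $s=2$ breaks the argument in several places, and as a result the constant you would actually obtain is the old $(\sqrt2)^{m-1}$ (or $(2/\sqrt\pi)^{m-1}$), not $\prod_{j=2}^m A_{(2j-2)/j}^{-1}$.

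First, the arithmetic: with $s=2$ in the formula $\lambda_{m,s}=\frac{2ps}{mps+ps+2p-2mp-2ms}$ one gets $\lambda_{m,2}=\frac{4p}{4p-4m}=\frac{p}{p-m}$, not the expression you wrote. Moreover the exponents $\left(\lambda_{m,2},2,\dots,2\right),\dots,\left(2,\dots,2,\lambda_{m,2}\right)$ have reciprocal-sum $\frac{m-1}{2}+\frac{p-m}{p}=\frac{m+1}{2}-\frac{m}{p}<\frac{m+1}{2}$ for finite $p$, so they are \emph{not} Bohnenblust--Hille exponents and Lemma~\ref{81} does not apply to them directly; the passage to $\ell_\infty$ has to go through $\lambda_{0,2}$, not $\lambda_{m,2}$. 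Second, the claim that the cubic threshold $p>2m^3-4m^2+2m$ forces $\lambda_{m,2}\ge\frac{2m-2}{m}$ is false in exactly the regime of the theorem: for $m\ge3$, $\frac{p}{p-m}<\frac{2m-2}{m}$ is equivalent to $p>\frac{2m(m-1)}{m-2}$, which is implied by $p>2m(m-1)^2$. Third, and most importantly, with $s=2$ the relevant $\ell_\infty$ exponent is $\left(\lambda_{0,2},2,\dots,2\right)=\left(1,2,\dots,2\right)$, whose coordinate $2$ equals or exceeds the threshold $\frac{2m^2-4m+2}{m^2-m-1}$ (indeed $\frac{2m^2-4m+2}{m^2-m-1}\le 2$ for $m\ge2$, strictly for $m\ge3$), so Theorem~\ref{zmuzmu} does not apply, and the constant for $(1,2,\dots,2)$ is in fact $(\sqrt2)^{m-1}$ — this is exactly the source of the \emph{old} estimate \eqref{8u7} and of the weaker case (ii) in Section~\ref{sec4}. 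Choosing $s=2$ therefore cannot yield the improved constant.

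The paper's actual choice is $s=\rho=\frac{2mp}{mp+p-2m}$. With this $s$ one has $\lambda_{m,s}=\rho$ as well, so the exponent $(\rho,\dots,\rho)$ \emph{is} the mixed exponent $(\lambda_{m,s},s,\dots,s)$ and no further interpolation at the $\ell_p$ level is needed. The companion $\ell_\infty$ exponent is $(\lambda_{0,s},s,\dots,s)$ with $\lambda_{0,s}<s$, and the role of the hypothesis $p>2m^3-4m^2+2m$ is precisely to force $s=\rho<\frac{2m^2-4m+2}{m^2-m-1}$, which puts the entire $\ell_\infty$ exponent below the threshold so that Theorem~\ref{zmuzmu} gives the constant $\prod_{j=2}^m A_{(2j-2)/j}^{-1}$. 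That estimate is then transferred to $\ell_p$ by the inductive argument from \cite{ap}. So the gap in your proposal is not a technical detail to be checked — the parameter $s$ must be $\rho$, and the role of the cubic threshold is to control $\rho$ against $\frac{2m^2-4m+2}{m^2-m-1}$, not $\lambda_{m,2}$ against $\frac{2m-2}{m}$.
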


\begin{proof}
The case $p=\infty$ in \eqref{665} is precisely the Bohnenblust--Hille
inequality, so we just need to consider $2m^{3}-4m^{2}+2m < p<\infty.$ Let
$\frac{2m-2}{m}\leq s\leq2$ and
\[
\lambda_{0,s}=\frac{2s}{ms+s+2-2m}.
\]
Note that
\begin{equation}
\label{invt2}
ms+s+2-2m>0 \qquad \text{and} \qquad 1\leq \lambda_{0,s}\leq 2.
\end{equation}

Since
\[
\frac{m-1}{s}+\frac{1}{\lambda_{0,s}}=\frac{m+1}{2},
\]
from the generalized Bohnenblust--Hille inequality (see \cite{alb}) we know
that there is a constant $C_{m}\geq1$ such that for all $m$-linear forms
$T:\ell_{\infty}^{n}\times\cdots\times\ell_{\infty}^{n}\rightarrow\mathbb{K}$
we have, for all $i=1,....,m,$%
\begin{equation}
\left(  \sum\limits_{j_{i}=1}^{n}\left(  \sum\limits_{\widehat{j_{i}}=1}
^{n}\left\vert T\left(  e_{j_{1}},...,e_{j_{m}}\right)  \right\vert
^{s}\right)  ^{\frac{1}{s}\lambda_{0,s}}\right)  ^{\frac{1}{\lambda_{0,s}}%
}\leq C_{m}\left\Vert T\right\Vert . \label{112}%
\end{equation}

Above, $\sum\limits_{\widehat{j_{i}}=1}^{n}$ means the sum over all $j_{k}$
for all $k\neq i.$ If we choose $s=\frac{2mp}{mp+p-2m}$ (note that this $s$ belongs to the interval $\left[\frac{2m-2}{m},2\right]$), we have $s>\frac{2m}{m+1}$ (this inequality is strict because we are considering the case $p<\infty$) and thus $\lambda_{0,s}<s$. In fact, $s>\frac{2m}{m+1}$ implies $ms+s>2m$ and thus adding $2$ in both sides of this inequality we can conclude that $$\frac{2s}{ms+s+2-2m}<\frac{2s}{2}=s,$$ i.e.,
\begin{equation}
\label{lambda0s}
\lambda_{0,s}<s.
\end{equation}

Since $p > 2m^{3}-4m^{2}+2m$ we conclude that
\[
s<\frac{2m^2-4m+2}{m^2-m-1}.
\]
Thus, from Theorem \ref{zmuzmu}, the optimal
constant associated to the multiple exponent
\[
\left(  \lambda_{0,s},s,s,...,s\right)
\]
is less than or equal to
\[
C_{m}=\prod\limits_{j=2}^{m}A_{\frac{2j-2}{j}}^{-1}.
\]
More precisely, \eqref{112} is valid with $C_{m}$ as above. Now the proof follows the same lines, \textit{mutatis mutandis}, of the proof of \cite[Theorem 1.1]{ap}.
\end{proof}

\begin{remark}
Note that it is simple to verify that these new estimates are better than the
old estimates. In fact, for complex scalars the inequality
\[
\prod\limits_{j=2}^{m}A_{\frac{2j-2}{j}}^{-1}<\left(  \frac{2}{\sqrt{\pi}%
}\right)  ^{\frac{2m\left(  m-1\right)  }{p}}\left(  \prod\limits_{j=2}%
^{m}A_{\frac{2j-2}{j}}^{-1}\right)  ^{\frac{p-2m}{p}}%
\]
is a straightforward consequence of
\[
\prod\limits_{j=2}^{m}A_{\frac{2j-2}{j}}^{-1}<\left(  \frac{2}{\sqrt{\pi}%
}\right)  ^{m-1},
\]
which is true for $m\geq3.$ The case of real scalars is analogous.
\end{remark}

Recall that from \cite{ap} we know that for $p\geq m^{2}$ the constants of the
Hardy--Littlewood inequality have a subpolynomial growth. The following graph
illustrates what we have thus far, combined with Theorem \ref{667}.

\begin{figure}[h]
\centering
\includegraphics[scale=0.7]{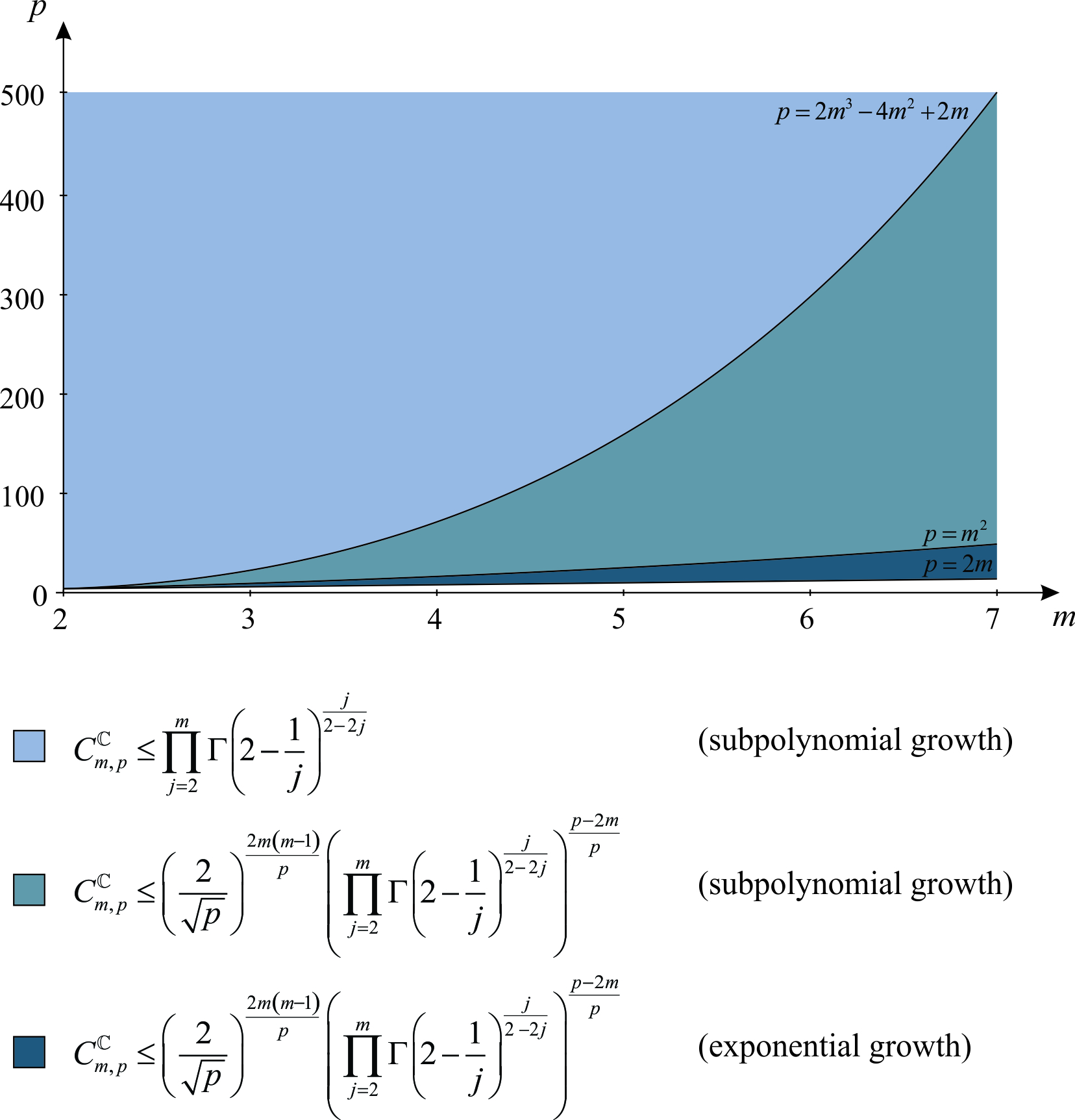}
\end{figure}

A question that arises naturally is: Are the optimal constants of the
Hardy--Littlewood and Bohnenblust--Hille inequalities the same? This result is
maybe slightly suggested by the above estimates. In addition, the best known
lower estimates for the real constants of the Hardy--Littlewood inequality (see
\cite{aplower}) are very similar to the respective lower estimates for the real constants
of the Bohnenblust--Hille inequality as it can be seen in \cite{diniz}. More
precisely, from \cite{aplower, diniz} we know that, for $m\geq2,$%
\[
C_{m,p}^{\mathbb{R}}>2^{\frac{mp+(6-4\log_{2}(1.74))m-2m^{2}-p}{mp}}>1
\]
and%
\[
\mathrm{B}_{\mathbb{R},m}^{\mathrm{mult}}\geq2^{1-\frac{1}{m}}\geq\sqrt{2}.
\]

\section{Application 2: Constants of the generalized Hardy-Littlewood inequality}

\label{sec4}

Given an integer $m\geq2$, the generalized Hardy--Littlewood inequality (see
\cite{alb, hardy,pra}) asserts that for $2m\leq p\leq\infty$ and
$\mathbf{q}:=(q_{1},...,q_{m})\in\left[  \frac{p}{p-m},2\right]  ^{m}$ such
that
\begin{equation}
\frac{1}{q_{1}}+...+\frac{1}{q_{m}}\leq\frac{mp+p-2m}{2p}, \label{ppoo}%
\end{equation}
there exists a constant $C_{m,p,\mathbf{q}}^{\mathbb{K}}\geq1$ such that, for
all continuous $m$--linear forms $T:\ell_{p}^{n}\times\cdots\times\ell_{p}%
^{n}\rightarrow\mathbb{K}$ and all positive integers $n$,
\[
\left(  \sum_{j_{1}=1}^{n}\left(  \sum_{j_{2}=1}^{n}\left(  \cdots\left(
\sum_{j_{m}=1}^{n}\left\vert T(e_{j_{1}},...,e_{j_{m}})\right\vert ^{q_{m}%
}\right)  ^{\frac{q_{m-1}}{q_{m}}}\cdots\right)  ^{\frac{q_{2}}{q_{3}}%
}\right)  ^{\frac{q_{1}}{q_{2}}}\right)  ^{\frac{1}{q_{1}}}\leq
C_{m,p,\mathbf{q}}^{\mathbb{K}}\left\Vert T\right\Vert .
\]
The best known estimates for the constants $C_{m,p,\mathbf{q}}^{\mathbb{K}}$
are $\left(  \sqrt{2}\right)  ^{m-1}$ for real scalars and $\left(  \frac
{2}{\sqrt{\pi}}\right)  ^{m-1}$ for complex scalars (see \cite{alb}). Very
recently, in \cite{ap} (and in the previous section, see \eqref{btt}), better
constants were obtained when $q_{1}=...=q_{m}=\frac{2mp}{mp+p-2m}.$ Now we
extend the results from \cite{ap} to general multiple exponents. Of course the
interesting case is the border case, i.e., when we have an equality in
\eqref{ppoo}. The proof is slightly more elaborated than the proof of Theorem
\ref{667} and also a bit more technical that the proof of the main result of \cite{ap}.

\begin{theorem}
Let $m\geq2$ be a positive integer and $2m<p\leq\infty$. Let also $\mathbf{q}:=\left(  q_{1},...,q_{m}\right)  \in\left[
\frac{p}{p-m},2\right]  ^{m}$ be such that
\[
\frac{1}{q_{1}}+...+\frac{1}{q_{m}}=\frac{mp+p-2m}{2p}.
\]

(i) If $\max q_{i}<\frac{2m^{2}-4m+2}{m^{2}-m-1}$, then
\[%
\begin{array}
[c]{llll}%
\displaystyle C_{m,p,\mathbf{q}}^{\mathbb{C}} & \leq & \prod\limits_{j=2}%
^{m}\Gamma\left(  2-\frac{1}{j}\right)  ^{\frac{j}{2-2j}},\vspace{0.2cm} & \\
C_{m,p,\mathbf{q}}^{\mathbb{R}} & \leq & \displaystyle 2^{\frac{446381}%
{55440}-\frac{m}{2}}\prod\limits_{j=14}^{m}\left(  \frac{\Gamma\left(
\frac{3}{2}-\frac{1}{j}\right)  }{\sqrt{\pi}}\right)  ^{\frac{j}{2-2j}}, &
\displaystyle\text{ if }m\geq14,\vspace{0.2cm}\\
C_{m,p,\mathbf{q}}^{\mathbb{R}} & \leq & \displaystyle\prod\limits_{j=2}%
^{m}2^{\frac{1}{2j-2}}, & \displaystyle\text{ if }2\leq m\leq13.
\end{array}
\]

(ii) If $\max q_{i}\geq\frac{2m^{2}-4m+2}{m^{2}-m-1}$, then
\[%
\begin{array}
[c]{llll}%
\displaystyle C_{m,p,\mathbf{q}}^{\mathbb{C}} & \leq & \left(  \frac{2}%
{\sqrt{\pi}}\right)  ^{2\left(  m-1\right)  \left(  \frac{m+1}{2}-\frac
{m}{\max q_{i}}\right)  }\left(  \displaystyle\prod\limits_{j=2}^{m}%
\Gamma\left(  2-\frac{1}{j}\right)  ^{\frac{j}{2-2j}}\right)  ^{m\left(
\frac{2}{\max q_{i}}-1\right)  },\vspace{0.2cm} & \\
C_{m,p,\mathbf{q}}^{\mathbb{R}} & \leq & \displaystyle 2^{\left(  m-1\right)
\left(  \frac{m+1}{2}-\frac{m}{\max q_{i}}\right)  }\left(  2^{\frac
{446381}{55440}-\frac{m}{2}}\prod\limits_{j=14}^{m}\left(  \frac{\Gamma\left(
\frac{3}{2}-\frac{1}{j}\right)  }{\sqrt{\pi}}\right)  ^{\frac{j}{2-2j}%
}\right)  ^{m\left(  \frac{2}{\max q_{i}}-1\right)  }, & \displaystyle\text{
if }m\geq14,\vspace{0.2cm}\\
C_{m,p,\mathbf{q}}^{\mathbb{R}} & \leq & \displaystyle 2^{\left(  m-1\right)
\left(  \frac{m+1}{2}-\frac{m}{\max q_{i}}\right)  }\left(  \prod
\limits_{j=2}^{m}2^{\frac{1}{2j-2}}\right)  ^{m\left(  \frac{2}{\max q_{i}%
}-1\right)  }, & \displaystyle\text{ if }2\leq m\leq13.
\end{array}
\]

\end{theorem}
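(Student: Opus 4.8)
The plan is to mimic the proof of Theorem \ref{667}, but now interpolating between a "non-diagonal" generalized Bohnenblust--Hille exponent and a fully symmetric one, so that the final multiple exponent $\mathbf{q}=(q_1,\dots,q_m)$ (with possibly distinct entries) is reached. First I would fix the value $s_0:=\max q_i$ and note that the hypothesis $\mathbf q\in[\frac{p}{p-m},2]^m$ together with the equality $\sum 1/q_j=\frac{mp+p-2m}{2p}$ forces $s_0\in\left[\frac{2mp}{mp+p-2m},2\right]$. The idea is: for the exponent obtained from the generalized Hardy--Littlewood inequality at the symmetric point one already controls the constant via \cite[Theorem 1.1]{ap} (equivalently, via Theorem \ref{667} when $s_0$ is small enough); then Lemma \ref{lema} lets us write $(q_1^{-1},\dots,q_m^{-1})$ as a convex combination of the $m$ "one-coordinate-special" vectors built from $s_0$ and $\lambda_{m,s_0}$, and a repeated application of the Minkowski inequality (in the spirit of \cite{alb}) propagates the constant through the interpolation without any loss.

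For part (i), when $s_0<\frac{2m^2-4m+2}{m^2-m-1}$, I would argue exactly as in Theorem \ref{667}: choose $s=s_0$ in Lemma \ref{lema} so that $\mathbf q$ is the interpolation of the exponents $(\lambda_{m,s_0},s_0,\dots,s_0),\dots,(s_0,\dots,s_0,\lambda_{m,s_0})$. By \eqref{invt1} these exponents lie in $[\frac{p}{p-m},2]^m$, and since $s_0<\frac{2m^2-4m+2}{m^2-m-1}$ Theorem \ref{zmuzmu} (after passing to the associated $\ell_\infty$ multilinear estimate, or directly the generalized Hardy--Littlewood constant from \cite{ap}) gives that each of these $m$ exponents has constant at most $\prod_{j=2}^m A_{\frac{2j-2}{j}}^{-1}$; then Minkowski's inequality for the interpolation yields the same bound for $\mathbf q$. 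Plugging the explicit values of $A_{\frac{2j-2}{j}}$ for complex Steinhaus variables and for real Rademacher variables (splitting at $m\le 13$ versus $m\ge14$, using the best constants recorded in \eqref{khfr}) produces the three displayed estimates.

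For part (ii), when $s_0\ge\frac{2m^2-4m+2}{m^2-m-1}$, the value $\lambda_{m,s_0}$ may drop below $1$, so Lemma \ref{lema} cannot be applied directly with $s=s_0$; instead I would interpolate between the symmetric exponent $\left(\frac{2m}{m+1},\dots,\frac{2m}{m+1}\right)$ (the classical Bohnenblust--Hille exponent, whose constant is bounded by the right-hand sides of \eqref{khfr}) and the symmetric exponent $(s_0,\dots,s_0)$ — equivalently use a two-point interpolation with weight $\theta$ determined by $\frac{1}{\max q_i}=\frac{\theta}{2m/(m+1)}+\frac{1-\theta}{??}$. The cleaner route: write each $q_j^{-1}$ as a convex combination with weights $\mu_j=\frac{(m+1)/2-m/q_j}{(m+1)/2-m/s_0}$ of the vector $(\tfrac{m+1}{2m},\dots,\tfrac{m+1}{2m})$-type blocks, so that the multiple exponent $\mathbf q$ is obtained from the Bohnenblust--Hille exponent by a controlled "inclusion-type" step of order $2(m-1)\bigl(\frac{m+1}{2}-\frac{m}{\max q_i}\bigr)$ together with a scaling step of order $m\bigl(\frac{2}{\max q_i}-1\bigr)$; here the first step costs a factor $(2/\sqrt\pi)^{\,2(m-1)(\frac{m+1}{2}-\frac{m}{\max q_i})}$ (resp. $\sqrt2^{\,\cdots}$) coming from the Khinchine--Steinhaus constant used to pass from $\ell_\infty$ coefficients to the $s_0$-summability, exactly as in the inclusion argument of \cite{n, ap}, while the second step raises the Bohnenblust--Hille constant to the power $m\bigl(\frac{2}{\max q_i}-1\bigr)$. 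Reassembling these two contributions gives the stated bounds.

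The main obstacle I anticipate is the bookkeeping in part (ii): one must verify that all intermediate multiple exponents genuinely lie in $[\frac{p}{p-m},2]^m$ (this is where the constraint $2m<p$ and the inequalities \eqref{invt1}--\eqref{invt2} are used) and that the exponents $\mu_j$ of the convex combination are indeed in $[0,1]$ with $\sum\mu_j$ behaving correctly, so that Minkowski's inequality applies at each nested sum; the computation of the two exponents $2(m-1)\bigl(\frac{m+1}{2}-\frac{m}{\max q_i}\bigr)$ and $m\bigl(\frac{2}{\max q_i}-1\bigr)$ is then a matter of tracking how many times the Khinchine constant $A_{\frac{2j-2}{j}}^{-1}$ (resp. the Steinhaus/Rademacher constant $\frac{2}{\sqrt\pi}$, $\sqrt2$) is invoked along the chain. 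Once the combinatorics of the interpolation is set up correctly, everything else is the same \textit{mutatis mutandis} as the proof of \cite[Theorem 1.1]{ap}.
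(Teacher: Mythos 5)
Your proposal for part (i) is essentially the paper's argument: interpolate $\mathbf q$ via Lemma \ref{lema} between the $m$ exponents $(\lambda_{m,s},s,\dots,s),\dots,(s,\dots,s,\lambda_{m,s})$, use Theorem \ref{zmuzmu} at the $\ell_\infty$ level and then the coordinate-by-coordinate $\ell_\infty\to\ell_p$ induction from \cite{ap} to bound each $C^{\mathbb K}_{m,p,(\lambda_{m,s},s,\dots,s)}$ by $\prod_{j=2}^{m}A_{\frac{2j-2}{j}}^{-1}$, then the interpolative technique of \cite{alb}. One small technical point: Lemma \ref{lema} requires $s\in(\max q_i,2]$ \emph{strictly} (otherwise $\theta_{j,s}=0$ for the index achieving the max, and the lemma's strict inequalities fail), so you cannot take $s=s_0=\max q_i$ as written; instead, pick any $s$ in the nonempty open interval $\left(\max q_i,\frac{2m^2-4m+2}{m^2-m-1}\right)$, which is exactly what the paper does. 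Since the bound $\prod A^{-1}$ is independent of $s$, no limiting argument is needed in (i).

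Part (ii) has a genuine gap, and the premise you use to motivate the switch of strategy is incorrect. You claim $\lambda_{m,s_0}$ ``may drop below $1$'', but in fact for every $s\in\left[\frac{2mp}{mp+p-2m},2\right]$ (and $s_0=\max q_i$ lies in this interval) inequalities \eqref{invt1} give $\frac{p}{p-m}\le\lambda_{m,s}\le 2$, so $\lambda_{m,s}\ge 1$ always; Lemma \ref{lema} is still perfectly applicable with $s=\max q_i+\epsilon$ (the paper does exactly this, then lets $\epsilon\to0$ since, unlike (i), the final bound depends on $s$). What actually breaks in case (ii) is Theorem \ref{zmuzmu}: its hypothesis $\max q_i<\frac{2m^2-4m+2}{m^2-m-1}$ is exactly the thing that fails, so the constant $\prod A^{-1}$ for $(\lambda_{0,s},s,\dots,s)$ is no longer available and a weaker bound must be produced. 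Moreover, the concrete interpolation you propose — between $\left(\frac{2m}{m+1},\dots,\frac{2m}{m+1}\right)$ and $(s_0,\dots,s_0)$ — is not a valid generalized Bohnenblust--Hille interpolation: when $s_0>\frac{2m}{m+1}$ the exponent $(s_0,\dots,s_0)$ has $\sum 1/q_j=m/s_0<\frac{m+1}{2}$, so it does not lie on the critical hyperplane and carries no generalized BH constant to interpolate. The paper instead interpolates at the $\ell_\infty$ level between the two \emph{valid} exponents $(1,2,\dots,2)$ (constant $(\sqrt 2)^{m-1}$, resp.\ $(2/\sqrt\pi)^{m-1}$) and $\left(\frac{2m}{m+1},\dots,\frac{2m}{m+1}\right)$ (constant $B^{\mathrm{mult}}_{\mathbb K,m}$), with weights $\theta_1=2\bigl(\frac{1}{\lambda_{0,s}}-\frac{1}{s}\bigr)$ and $\theta_2=m\bigl(\frac{2}{s}-1\bigr)$, to bound $C_m$ for $(\lambda_{0,s},s,\dots,s)$; then runs the $\ell_\infty\to\ell_p$ induction to reach $(\lambda_{m,s},s,\dots,s)$ with the same constant; and only then invokes Lemma \ref{lema} with $s=s_{\mathbf q+\epsilon}$ plus the $\epsilon\to0$ limit. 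Your ``cleaner route'' gestures at the right numerology for the exponents $2(m-1)\bigl(\frac{m+1}{2}-\frac{m}{\max q_i}\bigr)$ and $m\bigl(\frac{2}{\max q_i}-1\bigr)$, but does not identify the correct pair of $\ell_\infty$ exponents to interpolate, so the bookkeeping you flag as the main obstacle is in fact the missing core of the argument.
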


\begin{proof}
Let us first suppose $\max q_{i}<\frac{2m^{2}-4m+2}{m^{2}-m-1}.$ The arguments
follow the general lines of \cite{ap}, but are slightly different and due the
technicalities we present the details for the sake of clarity. Define for
$s\in\left(  \max q_{i},\frac{2m^{2}-4m+2}{m^{2}-m-1}\right)  $,
\begin{equation}
\lambda_{m,s}=\frac{2ps}{mps+ps+2p-2mp-2ms}. \label{bvb}%
\end{equation}
Observe that $\lambda_{m,s}$ is well defined for all $s\in\left(\max q_i,\frac{2m^{2}-4m+2}{m^{2}-m-1}\right)$. In fact, as we have in (\ref{invt1})
 note that for all $s\in\left[\frac{2mp-2p}{mp-2m},2\right]$ we have $$mps+ps+2p-2mp-2ms>0 \qquad \text{and} \qquad \frac{p}{p-m}\leq \lambda_{m,s}\leq 2.$$ Since $s>\max q_{i}\geq\frac{2mp}{mp+p-2m}>\frac{2mp-2p}{mp-2m}$ (the last inequality is strict because we are not considering the case $p=2m$) and $\frac{2m^{2}-4m+2}{m^{2}-m-1}\leq 2$ it follows that $\lambda_{m,s}$ is well defined for all $s\in\left(\max q_i,\frac{2m^2-4m+2}{m^2-m-1}\right)$.

Let us prove
\begin{equation}
C_{m,p,\left(  \lambda_{m,s},s,...,s\right)  }^{\mathbb{K}}\leq\prod
\limits_{j=2}^{m}A_{\frac{2j-2}{j}}^{-1} \label{8866}%
\end{equation}
for all $s\in\left(  \max q_{i},\frac{2m^{2}-4m+2}{m^{2}-m-1}\right)$. In
fact, for these values of $s$, consider
\[
\lambda_{0,s}=\frac{2s}{ms+s+2-2m}.
\]
Observe that if $p=\infty$ then $\lambda_{m,s}=\lambda_{0,s}$. Since%
\[
\frac{m-1}{s}+\frac{1}{\lambda_{0,s}}=\frac{m+1}{2},
\]
from the generalized Bohnenblust--Hille inequality (see \cite{alb}) we know
that there is a constant $C_{m}\geq1$ such that for all $m$-linear forms
$T:\ell_{\infty}^{n}\times\cdots\times\ell_{\infty}^{n}\rightarrow\mathbb{K}$
we have, for all $i=1,....,m,$%

\begin{equation}
\left(  \sum\limits_{j_{i}=1}^{n}\left(  \sum\limits_{\widehat{j_{i}}=1}%
^{n}\left\vert T\left(  e_{j_{1}},...,e_{j_{m}}\right)  \right\vert
^{s}\right)  ^{\frac{1}{s}\lambda_{0,s}}\right)  ^{\frac{1}{\lambda_{0,s}}%
}\leq C_{m}\left\Vert T\right\Vert . \label{78}%
\end{equation}
Since
\[
\frac{2m}{m+1}\leq \frac{2mp}{mp+p-2m}\leq\max q_{i}<s < \frac{2m^{2}-4m+2}{m^{2}-m-1}%
\]
it is not to difficult to prove that (see \eqref{lambda0s})
\[
\lambda_{0,s}<s < \frac{2m^{2}-4m+2}{m^{2}-m-1}.
\]

Since $s < \frac{2m^{2}-4m+2}{m^{2}-m-1}$ we conclude by Theorem \ref{zmuzmu} that the optimal constant
associated to the multiple exponent%
\[
\left(  \lambda_{0,s},s,s,...,s\right)
\]
is less then or equal to
\begin{equation}
\prod\limits_{j=2}^{m}A_{\frac{2j-2}{j}}^{-1}. \label{9898}%
\end{equation}
More precisely, \eqref{78} is valid with $C_{m}$ as above.

Since $\lambda_{m,s}=\lambda_{0,s}$ if $p=\infty,$ we have \eqref{8866} for
all for all $s\in\left(  \max q_{i},\frac{2m^{2}-4m+2}{m^{2}-m-1}\right)$
and the proof is done for this case.

For $2m<p<\infty$, let%
\[
\lambda_{j,s}=\frac{\lambda_{0,s}p}{p-\lambda_{0,s}j}%
\]
for all $j=1,....,m.$ Note that%
\[
\lambda_{m,s}=\frac{2ps}{mps+ps+2p-2mp-2ms}%
\]
and this notation is compatible with \eqref{bvb}. Since $s>\max q_{i}\geq
\frac{2mp}{mp+p-2m}\geq\frac{2mp}{mp+p-2j}$ for all $j=1,...,m$ we also
observe that
\begin{equation}
\lambda_{j,s}<s \label{ult}%
\end{equation}
for all $j=1,....,m$. Moreover, observe that%
\[
\left(  \frac{p}{\lambda_{j,s}}\right)  ^{\ast}=\frac{\lambda_{j+1,s}}%
{\lambda_{j,s}}%
\]
for all $j=0,...,m-1$. Here, as usual, $\left(  \frac{p}{\lambda_{j,s}}\right)  ^{\ast}$
denotes the conjugate number of $\left(  \frac{p}{\lambda_{j,s}}\right)  $.
From now on part of the proof of (i) follows the steps of the proof of the main
result of \cite{ap}, but we prefer to show the details for the sake of completeness (note that the final part of the proof of (i) requires a more  subtle argument than the one used in  \cite{ap}).

Let us suppose that $1\leq k\leq m$ and that%
\[
\left(  \sum_{j_{i}=1}^{n}\left(  \sum_{\widehat{j_{i}}=1}^{n}\left\vert
T(e_{j_{1}},...,e_{j_{m}})\right\vert ^{s}\right)  ^{\frac{1}{s}%
\lambda_{k-1,s}}\right)  ^{\frac{1}{\lambda_{k-1,s}}}\leq C_{m}\Vert T\Vert
\]
is true for all continuous $m$--linear forms $T:\underbrace{\ell_{p}^{n}%
\times\cdots\times\ell_{p}^{n}}_{k-1\ \mathrm{times}}\times\ell_{\infty}%
^{n}\times\cdots\times\ell_{\infty}^{n}\rightarrow\mathbb{K}$ and for all
$i=1,...,m.$ Let us prove that%
\[
\left(  \sum_{j_{i}=1}^{n}\left(  \sum_{\widehat{j_{i}}=1}^{n}\left\vert
T(e_{j_{1}},...,e_{j_{m}})\right\vert ^{s}\right)  ^{\frac{1}{s}\lambda_{k,s}%
}\right)  ^{\frac{1}{\lambda_{k,s}}}\leq C_{m}\Vert T\Vert
\]
for all continuous $m$--linear forms $T:\underbrace{\ell_{p}^{n}\times
\cdots\times\ell_{p}^{n}}_{k\ \mathrm{times}}\times\ell_{\infty}^{n}%
\times\cdots\times\ell_{\infty}^{n}\rightarrow\mathbb{K}$ and for all
$i=1,...,m$.

The initial case (the case in which all $p=\infty$) is precisely \eqref{78}
with $C_{m}$ as in \eqref{9898}.

Consider%
\[
T\in\mathcal{L}(\underbrace{\ell_{p}^{n},...,\ell_{p}^{n}}_{k\ \mathrm{times}%
},\ell_{\infty}^{n},...,\ell_{\infty}^{n};\mathbb{R})
\]
and for each $x\in B_{\ell_{p}^{n}}$ define
\[%
\begin{array}
[c]{ccccl}%
T^{(x)} & : & \underbrace{\ell_{p}^{n}\times\cdots\times\ell_{p}^{n}%
}_{k-1\ \mathrm{times}}\times\ell_{\infty}^{n}\times\cdots\times\ell_{\infty
}^{n} & \rightarrow & \mathbb{R}\\
&  & (z^{(1)},...,z^{(m)}) & \mapsto & T(z^{(1)},...,z^{(k-1)},xz^{(k)}%
,z^{(k+1)},...,z^{(m)}),
\end{array}
\]
with $xz^{(k)}=(x_{j}z_{j}^{(k)})_{j=1}^{n}$. Observe that
\[
\Vert T\Vert=\sup\{\Vert T^{(x)}\Vert:x\in B_{\ell_{p}^{n}}\}.
\]
By applying the induction hypothesis to $T^{(x)}$, we obtain
\begin{equation}%
\begin{array}
[c]{l}%
\displaystyle\left(  \sum_{j_{i}=1}^{n}\left(  \sum_{\widehat{j_{i}}=1}%
^{n}\left\vert T\left(  e_{j_{1}},...,e_{j_{m}}\right)  \right\vert
^{s}\left\vert x_{j_{k}}\right\vert ^{s}\right)  ^{\frac{1}{s}\lambda_{k-1,s}%
}\right)  ^{\frac{1}{\lambda_{k-1,s}}}\vspace{0.2cm}\\
=\displaystyle\left(  \sum_{j_{i}=1}^{n}\left(  \sum_{\widehat{j_{i}}=1}%
^{n}\left\vert T\left(  e_{j_{1}},...,e_{j_{k-1}},xe_{j_{k}},e_{j_{k+1}%
},...,e_{j_{m}}\right)  \right\vert ^{s}\right)  ^{\frac{1}{s}\lambda_{k-1,s}%
}\right)  ^{\frac{1}{\lambda_{k-1,s}}}\vspace{0.2cm}\\
=\displaystyle\left(  \sum_{j_{i}=1}^{n}\left(  \sum_{\widehat{j_{i}}=1}%
^{n}\left\vert T^{(x)}\left(  e_{j_{1}},...,e_{j_{m}}\right)  \right\vert
^{s}\right)  ^{\frac{1}{s}\lambda_{k-1,s}}\right)  ^{\frac{1}{\lambda_{k-1,s}%
}}\vspace{0.2cm}\\
\leq C_{m}\Vert T^{(x)}\Vert\vspace{0.2cm}\\
\leq C_{m}\Vert T\Vert
\end{array}
\label{guga030}%
\end{equation}
for all $i=1,...,m.$

We will analyze two cases:

\begin{itemize}
\item $i=k.$
\end{itemize}

Since
\[
\left(  \frac{p}{\lambda_{j-1,s}}\right)  ^{\ast}=\frac{\lambda_{j,s}}%
{\lambda_{j-1}}%
\]
for all $j=1,...,m$, we conclude that
\[%
\begin{array}
[c]{l}%
\displaystyle\left(  \sum_{j_{k}=1}^{n}\left(  \sum_{\widehat{j_{k}}=1}%
^{n}\left\vert T\left(  e_{j_{1}},...,e_{j_{m}}\right)  \right\vert
^{s}\right)  ^{\frac{1}{s}\lambda_{k,s}}\right)  ^{\frac{1}{\lambda_{k,s}}%
}\vspace{0.2cm}\\
\displaystyle=\displaystyle\left(  \sum_{j_{k}=1}^{n}\left(  \sum
_{\widehat{j_{k}}=1}^{n}\left\vert T\left(  e_{j_{1}},...,e_{j_{m}}\right)
\right\vert ^{s}\right)  ^{\frac{1}{s}\lambda_{k-1,s}\left(  \frac{p}%
{\lambda_{k-1,s}}\right)  ^{\ast}}\right)  ^{\frac{1}{\lambda_{k-1,s}}\frac
{1}{\left(  \frac{p}{\lambda_{k-1,s}}\right)  ^{\ast}}} \vspace{0.2cm}\\
\displaystyle=\left\Vert \left(  \left(  \sum_{\widehat{j_{k}}=1}%
^{n}\left\vert T\left(  e_{j_{1}},...,e_{j_{m}}\right)  \right\vert
^{s}\right)  ^{\frac{1}{s}\lambda_{k-1,s}}\right)  _{j_{k}=1}^{n}\right\Vert
_{\left(  \frac{p}{\lambda_{k-1,s}}\right)  ^{\ast}}^{\frac{1}{\lambda
_{k-1,s}}}\vspace{0.2cm}\\
\displaystyle=\left(  \sup_{y\in B_{\ell_{\frac{p}{\lambda_{k-1,s}}}^{n}}}%
\sum_{j_{k}=1}^{n}|y_{j_{k}}|\left(  \sum_{\widehat{j_{k}}=1}^{n}\left\vert
T\left(  e_{j_{1}},...,e_{j_{m}}\right)  \right\vert ^{s}\right)  ^{\frac
{1}{s}\lambda_{k-1,s}}\right)  ^{\frac{1}{\lambda_{k-1,s}}}\vspace{0.2cm}\\
\displaystyle=\left(  \sup_{x\in B_{\ell_{p}^{n}}}\sum_{j_{k}=1}^{n}|x_{j_{k}%
}|^{\lambda_{k-1,s}}\left(  \sum_{\widehat{j_{k}}=1}^{n}\left\vert T\left(
e_{j_{1}},...,e_{j_{m}}\right)  \right\vert ^{s}\right)  ^{\frac{1}{s}%
\lambda_{k-1,s}}\right)  ^{\frac{1}{\lambda_{k-1,s}}}\vspace{0.2cm}\\
\displaystyle=\sup_{x\in B_{\ell_{p}^{n}}}\left(  \sum_{j_{k}=1}^{n}\left(
\sum_{\widehat{j_{k}}=1}^{n}\left\vert T\left(  e_{j_{1}},...,e_{j_{m}%
}\right)  \right\vert ^{s}\left\vert x_{j_{k}}\right\vert ^{s}\right)
^{\frac{1}{s}\lambda_{k-1,s}}\right)  ^{\frac{1}{\lambda_{k-1,s}}}%
\vspace{0.2cm}\\
\displaystyle\leq C_{m}\Vert T\Vert.
\end{array}
\]
where the last inequality holds by \eqref{guga030}.

\begin{itemize}
\item $i\neq k.$
\end{itemize}

It is clear that $\lambda_{k-1,s}<\lambda_{k,s}$ for all $1\leq
k\leq m$. Since $\lambda_{k,s}<s$ for all $1\leq
k\leq m$ (see \eqref{ult}) we get $$\lambda_{k-1,s}<\lambda_{k,s}<s \text{ for all }1\leq
k\leq m.$$ Denoting, for $i=1,....,m,$
\[
S_{i}=\left(  \sum_{\widehat{j_{i}}=1}^{n}|T(e_{j_{1}},...,e_{j_{m}}%
)|^{s}\right)  ^{\frac{1}{s}}%
\]
we get
\[%
\begin{array}
[c]{l}%
\displaystyle\sum_{j_{i}=1}^{n}\left(  \sum_{\widehat{j_{i}}=1}^{n}%
|T(e_{j_{1}},...,e_{j_{m}})|^{s}\right)  ^{\frac{1}{s}\lambda_{k,s}}%
=\sum_{j_{i}=1}^{n}S_{i}^{\lambda_{k,s}}=\sum_{j_{i}=1}^{n}S_{i}%
^{\lambda_{k,s}-s}S_{i}^{s}\vspace{0.2cm}\\
\displaystyle=\sum_{j_{i}=1}^{n}\sum_{\widehat{j_{i}}=1}^{n}\frac{|T(e_{j_{1}%
},...,e_{j_{m}})|^{s}}{S_{i}^{s-\lambda_{k,s}}}=\sum_{j_{k}=1}^{n}%
\sum_{\widehat{j_{k}}=1}^{n}\frac{|T(e_{j_{1}},...,e_{j_{m}})|^{s}}%
{S_{i}^{s-\lambda_{k,s}}}\vspace{0.2cm}\\
\displaystyle=\sum_{j_{k}=1}^{n}\sum_{\widehat{j_{k}}=1}^{n}\frac{|T(e_{j_{1}%
},...,e_{j_{m}})|^{\frac{s(s-\lambda_{k,s})}{s-\lambda_{k-1,s}}}}%
{S_{i}^{s-\lambda_{k,s}}}|T(e_{j_{1}},...,e_{j_{m}})|^{\frac{s(\lambda
_{k,s}-\lambda_{k-1,s})}{s-\lambda_{k-1,s}}}.
\end{array}
\]
Therefore, using H\"{o}lder's inequality twice we obtain
\begin{equation}%
\begin{array}
[c]{l}%
\displaystyle\sum_{j_{i}=1}^{n}\left(  \sum_{\widehat{j_{i}}=1}^{n}%
|T(e_{j_{1}},...,e_{j_{m}})|^{s}\right)  ^{\frac{1}{s}\lambda_{k,s}}%
\vspace{0.2cm}\\
\displaystyle\leq\sum_{j_{k}=1}^{n}\left(  \sum_{\widehat{j_{k}}=1}^{n}%
\frac{|T(e_{j_{1}},...,e_{j_{m}})|^{s}}{S_{i}^{s-\lambda_{k-1,s}}}\right)
^{\frac{s-\lambda_{k,s}}{s-\lambda_{k-1,s}}}\left(  \sum_{\widehat{j_{k}}%
=1}^{n}|T(e_{j_{1}},...,e_{j_{m}})|^{s}\right)  ^{\frac{\lambda_{k,s}%
-\lambda_{k-1,s}}{s-\lambda_{k-1,s}}}\vspace{0.2cm}\\
\displaystyle\leq\left(  \sum_{j_{k}=1}^{n}\left(  \sum_{\widehat{j_{k}}%
=1}^{n}\frac{|T(e_{j_{1}},...,e_{j_{m}})|^{s}}{S_{i}^{s-\lambda_{k-1,s}}%
}\right)  ^{\frac{\lambda_{k,s}}{\lambda_{k-1,s}}}\right)  ^{\frac
{\lambda_{k-1,s}}{\lambda_{k,s}}\cdot\frac{s-\lambda_{k,s}}{s-\lambda_{k-1,s}%
}}\times\left(  \sum_{j_{k}=1}^{n} \left(  \sum_{\widehat{j_{k}}=1}%
^{n}|T(e_{j_{1}},...,e_{j_{m}})|^{s}\right)  ^{\frac{1}{s}\lambda_{k,s}%
}\right)  ^{\frac{1}{\lambda_{k,s}}\cdot\frac{(\lambda_{k,s}-\lambda
_{k-1,s})s}{s-\lambda_{k-1,s}}}.
\end{array}
\label{huhu}%
\end{equation}
We know from the case $i=k$ that
\begin{equation}
\displaystyle\left(  \sum_{j_{k}=1}^{n}\left(  \sum_{\widehat{j_{k}}=1}%
^{n}|T(e_{j_{1}},...,e_{j_{m}})|^{s}\right)  ^{\frac{1}{s}\lambda_{k,s}%
}\right)  ^{\frac{1}{\lambda_{k,s}}\cdot\frac{(\lambda_{k,s}-\lambda
_{k-1,s})s}{s-\lambda_{k-1,s}}}\leq\left(  C_{m}\Vert T\Vert\right)
^{\frac{(\lambda_{k,s}-\lambda_{k-1,s})s}{s-\lambda_{k-1,s}}}. \label{huhi}%
\end{equation}
Now we investigate the first factor in \eqref{huhu}. From H\"{o}lder's
inequality and \eqref{guga030} it follows that
\begin{equation}%
\begin{array}
[c]{l}%
\displaystyle\left(  \sum_{j_{k}=1}^{n}\left(  \sum_{\widehat{j_{k}}=1}%
^{n}\frac{|T(e_{j_{1}},...,e_{j_{m}})|^{s}}{S_{i}^{s-\lambda_{k-1,s}}}\right)
^{\frac{\lambda_{k,s}}{\lambda_{k-1,s}}}\right)  ^{\frac{\lambda_{k-1,s}%
}{\lambda_{k,s}}}=\left\Vert \left(  \sum_{\widehat{j_{k}}}\frac{|T(e_{j_{1}%
},...,e_{j_{m}})|^{s}}{S_{i}^{s-\lambda_{k-1,s}}}\right)  _{j_{k}=1}%
^{n}\right\Vert _{\left(  \frac{p}{\lambda_{k-1,s}}\right)  ^{\ast}}%
\vspace{0.2cm}\\
\displaystyle=\sup_{y\in B_{\ell_{\frac{p}{\lambda_{k-1,s}}}^{n}}}\sum
_{j_{k}=1}^{n}|y_{j_{k}}|\sum_{\widehat{j_{k}}=1}^{n}\frac{|T(e_{j_{1}%
},...,e_{j_{m}})|^{s}}{S_{i}^{s-\lambda_{k-1,s}}}=\sup_{x\in B_{\ell_{p}^{n}}%
}\sum_{j_{k}=1}^{n}\sum_{\widehat{j_{k}}=1}^{n}\frac{|T(e_{j_{1}}%
,...,e_{j_{m}})|^{s}}{S_{i}^{s-\lambda_{k-1,s}}}|x_{j_{k}}|^{\lambda_{k-1,s}%
}\vspace{0.2cm}\\
\displaystyle=\sup_{x\in B_{\ell_{p}^{n}}}\sum_{j_{i}=1}^{n}\sum
_{\widehat{j_{i}}=1}^{n}\frac{|T(e_{j_{1}},...,e_{j_{m}})|^{s-\lambda_{k-1,s}%
}}{S_{i}^{s-\lambda_{k-1,s}}}|T(e_{j_{1}},...,e_{j_{m}})|^{\lambda_{k-1,s}%
}|x_{j_{k}}|^{\lambda_{k-1,s}}\vspace{0.2cm}\\
\displaystyle\leq\sup_{x\in B_{\ell_{p}^{n}}}\sum_{j_{i}=1}^{n}\left(
\sum_{\widehat{j_{i}}=1}^{n}\frac{|T(e_{j_{1}},...,e_{j_{m}})|^{s}}{S_{i}^{s}%
}\right)  ^{\frac{s-\lambda_{k-1,s}}{s}}\left(  \sum_{\widehat{j_{i}}=1}%
^{n}|T(e_{j_{1}},...,e_{j_{m}})|^{s}|x_{j_{k}}|^{s}\right)  ^{\frac{1}%
{s}\lambda_{k-1,s}}\vspace{0.2cm}\\
\displaystyle=\sup_{x\in B_{\ell_{p}^{n}}}\sum_{j_{i}=1}^{n}\left(
\sum_{\widehat{j_{i}}=1}^{n}|T(e_{j_{1}},...,e_{j_{m}})|^{s}|x_{j_{k}}%
|^{s}\right)  ^{\frac{1}{s}\lambda_{k-1,s}}\leq\left(  C_{m}\Vert
T\Vert\right)  ^{\lambda_{k-1,s}}.
\end{array}
\label{huho}%
\end{equation}
Replacing \eqref{huhi} and \eqref{huho} in \eqref{huhu} we conclude that
\begin{align*}
\displaystyle\sum_{j_{i}=1}^{n}\left(  \sum_{\widehat{j_{i}}=1}^{n}%
|T(e_{j_{1}},...,e_{j_{m}})|^{s}\right)  ^{\frac{1}{s}\lambda_{k,s}}  &
\leq\left(  C_{m}\Vert T\Vert\right)  ^{\lambda_{k-1,s}\frac{s-\lambda_{k,s}%
}{s-\lambda_{k-1,s}}}\left(  C_{m}\Vert T\Vert\right)  ^{\frac{(\lambda
_{k,s}-\lambda_{k-1,s})s}{s-\lambda_{k-1,s}}}\\
&  =\left(  C_{m}\Vert T\Vert\right)  ^{\lambda_{k,s}}%
\end{align*}
and finally the proof of \eqref{8866} is done for all $s\in\left(  \max
q_{i},\frac{2m^{2}-4m+2}{m^{2}-m-1}\right)$.

Now the proof uses a different argument from those from \cite{ap}, since a new
interpolation procedure is now needed. From (\ref{ult}) we know that $\lambda
_{m,s}<s$ for all $s\in\left(  \max q_{i},\frac{2m^{2}-4m+2}{m^{2}%
-m-1}\right)$. Therefore, using the Minkowski inequality as in \cite{alb}, it is
possible to obtain from \eqref{8866} that, for all fixed $i\in\left\{
1,...,m\right\}$,
\begin{equation}
C_{m,p,\left(  s,...,s,\lambda_{m,s},s,...,s\right)  }^{\mathbb{K}}\leq
\prod\limits_{j=2}^{m}A_{\frac{2j-2}{j}}^{-1}%
\end{equation}
for all $s\in\left(  \max q_{i},\frac{2m^{2}-4m+2}{m^{2}-m-1}\right)$ with
$\lambda_{m,s}$ in the $i$--th position. Finally, from Lemma \ref{lema} we
know that $\left(  q_{1}^{-1},...,q_{m}^{-1}\right)  $ belongs to the convex
hull of
\[
\left\{  \left(  \lambda_{m,s}^{-1},s^{-1},...,s^{-1}\right)  ,...,\left(
s^{-1},...,s^{-1},\lambda_{m,s}^{-1}\right)  \right\}
\]
for all $s\in\left(  \max q_{i},\frac{2m^{2}-4m+2}{m^{2}-m-1}\right)$ with
certain constants $\theta_{1,s},...,\theta_{m,s}$ and thus, from the
interpolative technique from \cite{alb}, we get
\begin{align*}
C_{m,p,\mathbf{q}}^{\mathbb{K}}  &  \leq\left(  C_{m,p,\left(  \lambda
_{m,s},s,...,s\right)  }^{\mathbb{K}}\right)  ^{\theta_{1,s}}\cdots\left(
C_{m,p,\left(  s,...,s,\lambda_{m,s}\right)  }^{\mathbb{K}}\right)
^{\theta_{m,s}}\\
&  \leq\left(  \prod\limits_{j=2}^{m}A_{\frac{2j-2}{j}}^{-1}\right)
^{\theta_{1,s}+\cdots+\theta_{m,s}}\\
&  =\prod\limits_{j=2}^{m}A_{\frac{2j-2}{j}}^{-1}.
\end{align*}

Now we prove (ii), which is simpler.

Define
\[
s_{\mathbf{q}}=\max q_{i}
\]
and, for $s\in\left[  \frac{2mp}{mp+p-2m},2\right]  $,%
\[
\lambda_{0,s}=\frac{2s}{ms+s+2-2m}%
\]
and
\begin{equation}
\lambda_{m,s}=\frac{2ps}{mps+ps+2p-2mp-2ms}. \label{bvbvvvvvvvv}%
\end{equation}
Since $\left[\frac{2mp}{mp+p-2m},2\right]\subseteq\left[\frac{2mp-2p}{mp-2m},2\right]$ we have $\frac{p}{p-m}\leq \lambda_{m,s}\leq 2$ (see (\ref{invt1})) and since $\left[\frac{2mp}{mp+p-2m},2\right]\subseteq\left[\frac{2m-2}{m},2\right]$ from \eqref{invt2} we know $1\leq \lambda_{0,s}\leq 2$.

We prove the case of real scalars. For complex scalars the proof is analogous,
and we can replace $\sqrt{2}$ by $\frac{2}{\sqrt{\pi}}$ and $\mathbb{R}$ by
$\mathbb{C}$. Since
\[
\frac{m-1}{s}+\frac{1}{\lambda_{0,s}}=\frac{m+1}{2},
\]
from the generalized Bohnenblust--Hille inequality (see \cite{alb}) we know
that there is a constant $C_{m}\geq1$ such that for all $m$-linear forms
$T:\ell_{\infty}^{n}\times\cdots\times\ell_{\infty}^{n}\rightarrow\mathbb{K}$
we have, for all $i=1,....,m,$%
\begin{equation}
\left(  \sum\limits_{j_{i}=1}^{n}\left(  \sum\limits_{\widehat{j_{i}}=1}%
^{n}\left\vert T\left(  e_{j_{1}},...,e_{j_{m}}\right)  \right\vert
^{s}\right)  ^{\frac{1}{s}\lambda_{0,s}}\right)  ^{\frac{1}{\lambda_{0,s}}%
}\leq C_{m}\left\Vert T\right\Vert . \label{7844}%
\end{equation}
Since
\[
\frac{2m}{m+1}\leq\frac{2mp}{mp+p-2m}\leq s\leq2,
\]
we know that
\begin{equation}\label{estrela}
\lambda_{0,s}\leq s\leq2.
\end{equation}
To verify the first inequality in \eqref{estrela} we just need to repeat the argument used to prove \eqref{lambda0s}, now supposing $s\geq\frac{2m}{m+1}$.

The multiple exponent%
\[
\left(  \lambda_{0,s},s,s,...,s\right)
\]
can be obtained by interpolating the multiple exponents $\left(
1,2...,2\right)  $ and $\left(  \frac{2m}{m+1},...,\frac{2m}{m+1}\right)  $
with, respectively,
\[%
\begin{array}
[c]{c}%
\displaystyle\theta_{1}=2\left(  \frac{1}{\lambda_{0,s}}-\frac{1}{s}\right)  ,
\vspace{0.2cm}\\
\displaystyle\theta_{2}=m\left(  \frac{2}{s}-1\right)  ,
\end{array}
\]
in the sense of \cite{alb}.

The exponent $\left(  \frac{2m}{m+1},...,\frac{2m}{m+1}\right)  $ is the
classical exponent of the Bohnenblust--Hille inequality and the estimate of
the constant associated to $\left(  1,2...,2\right)  $ is $\left(  \sqrt
{2}\right)  ^{m-1}$ (see, for instance, \cite{ap}, although this result is
very well-known)$.$

Therefore, the optimal constant associated to the multiple exponent%
\[
\left(  \lambda_{0,s},s,s,...,s\right)
\]
is less then or equal (for real scalars) to
\[
\left(  \left(  \sqrt{2}\right)  ^{m-1}\right)  ^{2\left(  \frac{1}%
{\lambda_{0,s}}-\frac{1}{s}\right)  }\left(  B_{\mathbb{R},m}^{\mathrm{mult}%
}\right)  ^{m\left(  \frac{2}{s}-1\right)  }%
\]
i.e.,%
\[
C_{m}\leq\left(  \sqrt{2}\right)  ^{2(m-1)\left(  \frac{m+1}{2}-\frac{m}%
{s}\right)  }\left(  B_{\mathbb{R},m}^{\mathrm{mult}}\right)  ^{m\left(
\frac{2}{s}-1\right)  }.
\]
More precisely, \eqref{7844} is valid with $C_{m}$ as above. For complex
scalars we can use the Khinchine inequality for Steinhaus variables and
replace $\sqrt{2}$ by $\frac{2}{\sqrt{\pi}}$ as in \cite{ddss}. Therefore,
analogously to the previous case (see also \cite[Theorem 1.1]{ap}), it is
possible to prove that
\begin{equation}
C_{m,p,(\lambda_{m,s},s,...,s)}^{\mathbb{R}}\leq\left(  \sqrt{2}\right)
^{2(m-1)\left(  \frac{m+1}{2}-\frac{m}{s}\right)  }\left(  B_{\mathbb{R}%
,m}^{\mathrm{mult}}\right)  ^{m\left(  \frac{2}{s}-1\right)  }
\label{lqlqlql000}%
\end{equation}
for all $s\in\left[  \frac{2mp}{mp+p-2m},2\right]  $.

Since $s\geq\frac{2mp}{mp+p-2m}$ we have $\lambda_{m,s}\leq s$ (in fact, we just need to imitate the argument used to prove \eqref{lambdams}, now supposing $s\geq\frac{2mp}{mp+p-2m}$) and so from
(\ref{lqlqlql000}), using the Minkowski inequality as in \cite{alb}, it is
possible to obtain, for all fixed $j\in\left\{  1,...,m\right\}  ,$%
\begin{equation}
C_{m,p,\left(  s,...,s,\lambda_{m,s},s,...,s\right)  }^{\mathbb{R}}\leq\left(
\sqrt{2}\right)  ^{2(m-1)\left(  \frac{m+1}{2}-\frac{m}{s}\right)  }\left(
B_{\mathbb{R},m}^{\mathrm{mult}}\right)  ^{m\left(  \frac{2}{s}-1\right)  }
\label{lqlqlql}%
\end{equation}
for all $s\in\left[  \frac{2mp}{mp+p-2m},2\right]  $ with $\lambda_{m,s}$ in
the $j$--th position. Therefore, given $\epsilon>0$ (sufficiently small),
consider%
\[
s_{\mathbf{q}+\epsilon}:=s_{\mathbf{q}}+\epsilon=\max q_{i}+\epsilon,
\]
and since $s_{\mathbf{q}+\epsilon}>\frac{2mp}{mp+p-2m}$ $\left(  \text{because
}s_{\mathbf{q}}=\max q_{i}\geq\frac{2mp}{mp+p-2m}\right)  $ we have
\eqref{lqlqlql} for $s=s_{\mathbf{q}+\epsilon}$. Finally, from Lemma
\ref{lema} we know that $\left(  q_{1}^{-1},...,q_{m}^{-1}\right)  $ belongs
to the convex hull of
\[
\left\{  \left(  \lambda_{m,s_{\mathbf{q}+\epsilon}}^{-1},s_{\mathbf{q}%
+\epsilon}^{-1},...,s_{\mathbf{q}+\epsilon}^{-1}\right)  ,...,\left(
s_{\mathbf{q}+\epsilon}^{-1},...,s_{\mathbf{q}+\epsilon}^{-1},\lambda
_{m,s_{\mathbf{q}+\epsilon}}^{-1}\right)  \right\}
\]
with certain constants $\theta_{1,s_{\mathbf{q}+\epsilon}},...,\theta
_{m,s_{\mathbf{q}+\epsilon}}$ and thus, from the interpolative technique from
\cite{alb}, we get
\[%
\begin{array}
[c]{l}%
C_{m,p,\mathbf{q}}^{\mathbb{R}}\vspace{0.2cm}\\
\leq\left(  C_{m,p,\left(  \lambda_{m,s_{\mathbf{q}+\epsilon}},s_{\mathbf{q}%
+\epsilon},...,s_{\mathbf{q}+\epsilon}\right)  }^{\mathbb{R}}\right)
^{\theta_{1,s_{\mathbf{q}+\epsilon}}}\cdots\left(  C_{m,p,\left(
s_{\mathbf{q}+\epsilon},...,s_{\mathbf{q}+\epsilon},\lambda_{m,s_{\mathbf{q}%
+\epsilon}}\right)  }^{\mathbb{R}}\right)  ^{\theta_{m,s_{\mathbf{q}+\epsilon
}}}\vspace{0.2cm}\\
\leq\left(  \left(  \sqrt{2}\right)  ^{2(m-1)\left(  \frac{m+1}{2}-\frac
{m}{s_{\mathbf{q}+\epsilon}}\right)  }\left(  B_{\mathbb{R},m}^{\mathrm{mult}%
}\right)  ^{m\left(  \frac{2}{s_{\mathbf{q}+\epsilon}}-1\right)  }\right)
^{\theta_{1,s_{\mathbf{q}+\epsilon}}+\cdots+\theta_{m,s_{\mathbf{q}+\epsilon}%
}}\\
=\left(  \sqrt{2}\right)  ^{2(m-1)\left(  \frac{m+1}{2}-\frac{m}%
{s_{\mathbf{q}+\epsilon}}\right)  }\left(  B_{\mathbb{R},m}^{\mathrm{mult}%
}\right)  ^{m\left(  \frac{2}{s_{\mathbf{q}+\epsilon}}-1\right)  }%
\end{array}
\]
for all $\epsilon>0$ sufficiently small. By making $\epsilon\rightarrow0$ we
get the result.
\end{proof}

\end{document}